\newtheorem{definition}{Definition}[section]
\newtheorem{theorem}{Theorem}[section]
\newtheorem{lemma}{Lemma}[section]
\newtheorem{corollary}{Corollary}[section]
\newtheorem{remark}{Remark}[section]
\newtheorem{fact}{Fact}[section]
\newtheorem{claim}{Claim}[section]
\numberwithin{equation}{section}
\newcommand{\nt}{\ensuremath{\mathbb{N}}}
\newenvironment{customthm}[1]
  {\innercustomthm}
  {\endinnercustomthm}
\theoremstyle{plain}
\newtheorem{maintheorem}{Theorem}
\newcommand{\crd}{\operatorname{CrD}}
\newcommand{\dist}{\operatorname{Dist}}
\newcommand{\car}{\operatorname{Card}}
\newcommand{\crit}{\operatorname{Crit}}
\title[Beau bounds for multicritical circle maps]{Beau bounds for multicritical circle maps}
\author{Gabriela Estevez}
\address{Instituto de Matem\'atica e Estat\'istica, Universidade de S\~ao Paulo}
\curraddr{Rua do Mat\~ao 1010, 05508-090, S\~ao Paulo SP, Brasil}
\email{gestevez@ime.usp.br}
\author{Edson de Faria}
\address{Instituto de Matem\'atica e Estat\'istica, Universidade de S\~ao Paulo}
\curraddr{Rua do Mat\~ao 1010, 05508-090, S\~ao Paulo SP, Brasil}
\email{edson@ime.usp.br}
\author{Pablo Guarino}
\address{Instituto de Matem\'atica e Estat\'istica, Universidade Federal Fluminense}
\curraddr{Rua M\'ario Santos Braga S/N, 24020-140, Niter\'oi, Rio de Janeiro, Brasil}
\email{pablo\_\,guarino@id.uff.br}
\subjclass[2010]{Primary: 37E10; Secondary:  37E20, 37F10, 37A05, 37C15.}
\keywords{Real bounds, multicritical circle maps, quasisymmetric rigidity, dynamical partitions.}
\thanks{This work has been partially supported by ``Projeto Tem\'atico Din\^amica em Baixas Dimens\~oes'' FAPESP Grant 2011/16265-2, by
CAPES (PROEX) and by IMPA's summer scholar program}
\begin{document}

\begin{abstract} 
Let $f: S^1\to S^1$ be a $C^3$ homeomorphism without periodic points having a finite number of critical points of {\it power-law type\/}. 
In this paper we establish {\it real a-priori bounds\/}, on the geometry of orbits of $f$, which are \emph{beau} in the sense of Sullivan, {\it i.e.\/} bounds that are asymptotically universal at small scales. The proof of the beau bounds presented here is an adaptation, to the multicritical setting, of the one given by the second author and de Melo in \cite{dFdM}, for the case of a single critical point.
\end{abstract}

\maketitle

\begin{center}
\dedicatory{\it Dedicated to Sebastian van Strien on the occasion of his 60th birthday.}
\end{center}

\section{Introduction}\label{sec:intro} 

In the study of smooth dynamical systems, it is often the case that the geometry of orbits at fine scales is completely determined by a small number 
of dynamical invariants.  The invariants in question can be combinatorial, topological, even 
measure-theoretic. This phenomenon is known as {\it rigidity\/}. In general, since in many cases a smooth self-map has a plethora of periodic orbits 
whose eigenvalues can vary under small perturbations, and since such 
eigenvalues are smooth conjugacy invariants, one can only hope to have rigidity in the absence of periodic points. 

The greatest success in the study of rigidity of dynamical systems, so far, has been achieved in dimension one, {\it i.e.\/} for
interval or circle dynamics. This success has been most complete in the case of invertible smooth dynamics on the circle -- homeomorphisms or 
diffeomorphisms of $S^1$ with sufficient 
smoothness. Here, it is known since Poincar\'e and Denjoy that the only topological invariant is the rotation number.
It follows from the seminal works of M.~Herman \cite{hermanihes} and J.-C. Yoccoz \cite{Y} that if $f$ is a $C^r$-smooth diffeomorphism of $S^1$, 
with $r\geq 3$, whose rotation number $\rho$ satisfies the Diophantine condition 
\begin{equation}\label{defdiof}
\left| \rho -\frac{p}{q}\right|\;\geq\;  \frac{C}{q^{2+\beta}}
\end{equation}for all rational numbers $p/q$, for some constants $C>0$ and $0\leq\beta<1$, then $f$ is $C^{r-1-\beta-\epsilon}$-conjugate to 
the corresponding rigid rotation, for every $\epsilon>0$. In other words, for almost all rotation numbers, a sufficiently smooth circle 
diffeomorphism is almost as smoothly conjugate to the rotation with the same rotation number. The slight loss of differentiability for the 
conjugacy is inherent to small-denominator problems, and is already present even if the diffeomorphism in question is a small perturbation 
of a rotation. On the other hand, Arnol'd has shown in his thesis (see \cite{Ar}) that there are real-analytic circle diffeomorphisms with ``bad" irrational
rotation numbers which are not even absolutely continuously conjugate to the corresponding rotation. These results (enhanced by further 
developments, {\it e.g.,\/} \cite{KO} and \cite{KT2}) yield a fairly complete solution to the rigidity problem for circle diffeomorphisms. 

For smooth homeomorphisms of the circle with critical points (of non-flat type), the topological classification is due to Yoccoz \cite{yoccoz}, see Theorem \ref{yoccoztheorem} below. Since no conjugacy between a map of this kind and the corresponding rigid rotation can be smooth, 
the correct thing to do when studying rigidity is to compare two such maps directly. In other words, assuming that there exists a topological 
conjugacy between two such maps taking the critical points of one to the critical points of the other, one asks: is this conjugacy a smooth diffeomorphism? 

In the case of smooth homeomorphisms having exactly one critical point -- the so-called {\it critical circle maps\/} -- a reasonably complete rigidity theory has emerged in recent years, thanks to the combined efforts of several mathematicians -- see \cite{dFdM, dFdM2, KT, KY} for the case of real-analytic homeomorphisms, and \cite{Gua,GMdM,GdM} for the case of \emph{finitely} smooth homeomorphisms. We summarize those contributions in the following statements: any two $C^3$ circle homeomorphisms with the same irrational rotation number of \emph{bounded type} (that is, $\beta=0$ in \eqref{defdiof}) and with a unique critical point (of the same odd power-law type), are conjugate to each other by a $C^{1+\alpha}$ circle diffeomorphism, for some universal $\alpha>0$ \cite{GdM}. Moreover, any two $C^4$ circle homeomorphisms with the same irrational rotation number and with a unique critical point (again, of the same odd type), are conjugate to each other by a $C^1$ diffeomorphism \cite{GMdM}. As it turns out, this 
conjugacy is a $C^{1+\alpha}$ diffeomorphism for a certain set of rotation numbers that has full Lebesgue measure (see \cite[Section 4.4]{dFdM} for its 
definition), but {\it does not\/} include all irrational rotation numbers (see the counterexamples in \cite{Av} and \cite[Section 5]{dFdM}).

By contrast, for smooth circle homeomorphisms having two or more critical points -- the so-called {\it multicritical circle maps\/}, see Definition \ref{def:multicritic} below -- the rigidity problem remains wide open.

The very first step in the study of rigidity is to get {\it real a-priori bounds\/} on the geometry of orbits. As it turns out, 
for one-dimensional maps with a finite number of critical points, the behaviour of the critical orbits essentially determines the behaviour
of all other orbits. Hence the task reduces to finding a-priori bounds on the critical orbits, and for this it suffices to get uniform 
bounds on the sequence of {\it scaling ratios\/} around each critical point, determined by succesive {\it closest returns\/} of the forward 
orbit of the critical point to itself. See \S \ref{prelim} for the relevant definitions. Such bounds have been obtained by M.~Herman
\cite{H} and G.~\'Swi\c{a}tek \cite{S}. A detailed proof of such bounds in the case of multicritical circle maps can be found in \cite{EdF}. 

Our goal in the present paper is to improve on the bounds presented in \cite{EdF} by showing that they are {\it beau\/} in the sense of 
Sullivan \cite{Su} (see Theorem \ref{teobeau} in Section \ref{statements}). This means that such bounds on the scaling ratios of the critical
orbits become asymptotically universal, {\it i.e.\/} 
independent of the map. As in the case of maps with a single critical point, beau bounds should yield a strong form of compactness of the 
{\it renormalizations\/} of a given multicritical circle map. However, the precise definition of such 
{`renormalization semi-group'\/} in the multicritical case is combinatorially more elaborate, and  
is therefore beyond the scope of the present paper.


\subsection{Summary of results} We proceed to {\it informal statements\/} of our main results. Some rough explanations about the terminology
adopted in these statements are in order (precise statements will be given in Section \ref{statements}). We write $S^1=\mathbb{R}/\mathbb{Z}$
for the unit circle, taken as an affine $1$-manifold, and use additive notation throughout. By a {\it multicritical circle map\/} 
$f: S^1\to S^1$ we mean an orientation-preserving, $C^3$-smooth homeomorphism having a finite number of critical points, all of which are 
non-flat (of power-law type), see Definition \ref{def:multicritic} below. Only maps without periodic orbits will matter to us. By a 
{\it scaling ratio\/} around a critical point we mean the ratio of distances, to said critical point, of two consecutive 
{\it closest returns\/} of the forward orbit of that critical point. 

\begin{theorem}\label{scalingratios} Let $f: S^1\to S^1$ be a multicritical circle map with irrational rotation number. Then the successive 
scaling ratios around each critical point of $f$ are uniformly bounded, and the bound is asymptotically independent of $f$.
\end{theorem}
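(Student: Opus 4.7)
The plan is to adapt the strategy of de Faria and de Melo from \cite{dFdM} to the multicritical setting, using as input the real a-priori bounds from \cite{EdF} (which give uniform but $f$-dependent control). The underlying idea is the following: at deep levels $n$ of the dynamical partition associated with a critical point $c$, the scaling ratio $|I_{n+1}(c)|/|I_n(c)|$ is computed by a high iterate of $f$ along the closest-return cycle. Because the trajectory intervals involved are small and have pairwise disjoint iterates, the distortion contributed by the ``diffeomorphism pieces'' of this iterate (the parts that stay away from critical points) becomes asymptotically negligible, while the distortion contributed by passages through critical points is asymptotically controlled by the power-law normal forms at those points. The latter depends only on the collection of critical exponents of $f$, not on $f$ itself, which is precisely what ``beau'' asserts.

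Concretely, I would proceed as follows. First, invoke the real bounds of \cite{EdF} to obtain uniform control on all interval lengths of the dynamical partitions around each critical point. Second, using $C^3$ regularity together with the cross-ratio (Koebe) inequality, establish a quantitative distortion principle: whenever an interval $J$ and the first $k-1$ of its forward images have pairwise bounded overlap and each lies at a definite distance from every critical point, the distortion of $f^k|_J$ is $1+o(1)$ as $|J|\to 0$, with the error depending on $f$ only through the already-established real bounds. Third, combine this with the power-law behaviour at each critical point to decompose $f^{q_n}|_{I_n(c)}$ as an alternating composition of diffeomorphic pieces (with Koebe distortion close to $1$) and passages through individual critical points (each contributing a universal, exponent-only factor). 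Passing to the limit $n\to\infty$, the non-universal distortion washes out, and the ratio $|I_{n+1}(c)|/|I_n(c)|$ is controlled by a bound depending only on the critical exponents and the combinatorics of closest returns.

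The main obstacle I anticipate is the bookkeeping forced by the presence of several critical points. In the unicritical case \cite{dFdM}, a single closest-return cycle interacts with the unique critical point in a combinatorially prescribed way, which makes the counting of near-critical passages transparent. Here, within $q_n$ iterates the orbit of $c$ may come close to several other critical points $c'\neq c$, in an order and frequency that depend on both the rotation number and the level $n$; for each such near passage one must quantify how close the orbit approaches and which power-law factor is thereby introduced. The crux is to design the alternating decomposition so that every diffeomorphic piece avoids all critical points by a definite definite amount (ensuring uniform Koebe constants), while the critical-passage pieces together exhaust all relevant near-critical excursions. Once this decomposition is in place with the right multicritical Koebe/cross-ratio tools, the asymptotic independence from $f$ should follow by sending $n\to\infty$ and invoking the uniform real bounds to absorb the finitely many initial levels.
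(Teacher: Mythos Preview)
Your outline follows the same route as the paper: use the $f$-dependent real bounds of \cite{EdF} as a starting point, decompose high iterates into diffeomorphic pieces with small distortion and finitely many critical passages with universal (exponent-only) cross-ratio effect, and conclude that the resulting bounds are asymptotically universal. The paper packages this as a universal Cross-Ratio Inequality (Theorem~\ref{CRIuniversal}) and then re-runs the proof of the real bounds from \cite{EdF} verbatim; Theorem~\ref{scalingratios} is then recorded as an immediate special case of Theorem~\ref{teobeau}.

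There is, however, a genuine gap in your two-type decomposition, and it is not the multicritical bookkeeping you flag. The problem is the pieces of the orbit that lie \emph{near} a critical point without containing it. Fix a reference level $n_1$ and track a containing interval $J_{n_1}\in\mathcal{P}_{n_1}$ along with $\Delta^*$. When $f^i(J_{n_1})$ contains some critical point $c$ but $f^i(\Delta^*)$ does not, Koebe is unavailable on the ensuing stretch (there is no $\tau$-scaled neighborhood of $f^i(\Delta^*)$ avoiding $c$, since $c$ may sit arbitrarily close to $\partial f^i(\Delta^*)$), and the power-law normal form does not apply either (since $c\notin f^i(\Delta^*)$). Your proposal does not say how such stretches are controlled, and they cannot simply be absorbed into either of your two types. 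The paper handles them with a separate lemma (Lemma~\ref{negschwarz}, which in turn rests on the $C^1$ bounds of Lemma~\ref{keyLemma}): for $n$ large, every iterate $f^j$ restricted to $I_{n_1}(c)$ or $I_{n_1+1}(c)$ has \emph{negative Schwarzian derivative}, so each such near-critical stretch weakly contracts cross-ratio and contributes a factor $\leq 1$. With this third type of piece in hand, the decomposition (Lemma~\ref{lemadecomp}) has at most $3N+1$ Koebe pieces with distortion $\leq 1+\varepsilon$, at most $3N$ single critical passages with cross-ratio distortion $\leq 9d^2$, and the remainder negative-Schwarzian; the universal bound follows, and the combinatorial concerns you raise evaporate because the number of non-trivial factors is bounded by $6N+1$, independently of $n$.
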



This theorem is, in fact, a special case of Theorem \ref{teobeau} stated in Section \ref{statements}; see also Section \ref{sec:beaubounds}. 
The main consequence of this result is the following {\it quasi-symmetric rigidity\/} statement, which is an improvement over the main 
theorem in \cite{EdF}. Given an orientation-preserving homeomorphism $h: S^1\to S^1$, we define its {\it local quasi-symmetric distortion\/}
to be the function $\sigma_h: S^1\to \mathbb{R}^+\cup \{\infty\}$ given by
\[
 \sigma_h(x)\;=\; \lim_{\delta\to 0}\limsup_{|t|\leq \delta} \frac{|h(x+t)-h(x)|}{|h(x)-h(x-t)|}\ .
\]
When $\sigma_h(x)\leq M$ for all $x \in S^1$ and some constant $M\geq 1$, we say that $h$ is {\it quasi-symmetric\/}. 

\begin{corollary}\label{coro} Let $f,g:S^1\to S^1$ be multicritical circle maps with the same irrational rotation number and the same number
$N$ of (non-flat) critical points, whose criticalities are bounded by some number $d>1$. Suppose $h: S^1\to S^1$ is a topological conjugacy 
between $f$ and $g$ which maps each critical point of $f$ to a critical point of $g$. Then $h$ is quasi-symmetric, and its local 
quasi-symmetric distortion is universally bounded, {\it i.e.\/} there exists a constant $K=K(N,d)>1$, independent of $f$ and $g$, such that 
$\sigma_h(x)\leq K$ for all $x\in S^1$.
\end{corollary}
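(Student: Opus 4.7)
The plan is to deduce the corollary from Theorem \ref{scalingratios} via the standard machinery of dynamical partitions, pushing the beau character of the real bounds through the conjugacy.

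First, I would recall (or develop in Section \ref{prelim}) the dynamical partitions $\mathcal{P}_n(f)$ associated with a multicritical circle map $f$: for each $n\geq 0$, $\mathcal{P}_n(f)$ is the partition of $S^1$ (modulo endpoints) whose atoms are iterates of the fundamental intervals $I_n^c$ and $I_{n+1}^c$ bounded by consecutive closest returns of the orbit of a critical point $c$. Because $h\circ f = g\circ h$ and $h$ maps $\mathrm{Crit}(f)$ bijectively onto $\mathrm{Crit}(g)$, $h$ sends atoms of $\mathcal{P}_n(f)$ bijectively to the corresponding atoms of $\mathcal{P}_n(g)$, preserving the combinatorial order.

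The next step is to translate Theorem \ref{scalingratios} into \emph{beau real bounds on the dynamical partitions}: for every $\varepsilon>0$ there is $n_0=n_0(N,d,\varepsilon)$ (independent of $f$) such that for all $n\geq n_0$ any two adjacent atoms $I,J\in\mathcal{P}_n(f)$ satisfy
\[
K(N,d)^{-1}\;\leq\; \frac{|I|}{|J|}\;\leq\; K(N,d),
\]
with $K(N,d)>1$ universal. The bound on ratios of fundamental intervals at a critical point is exactly the beau scaling-ratio bound; the same bound for non-fundamental atoms follows because each atom of $\mathcal{P}_n(f)$ is the image of some $I_n^c$ or $I_{n+1}^c$ under an iterate of $f$, and standard cross-ratio distortion estimates (applied to iterates of $f$ restricted to pieces of $\mathcal{P}_n(f)$, which have uniformly bounded geometry away from critical points) transfer comparability from the critical fibers to every adjacent pair.

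Now I would fix $x\in S^1$ and a small $t>0$ and pick $n$ \emph{maximal} such that $[x-t,\,x+t]$ is contained in the union $U$ of (at most, say) three consecutive atoms of $\mathcal{P}_n(f)$. Maximality together with the beau real bounds above give $|U|\asymp t$ universally, and every atom meeting $[x-t,x+t]$ has length comparable to $t$ with a constant depending only on $N$ and $d$. In particular the two subintervals $[x-t,x]$ and $[x,x+t]$ have lengths comparable to the neighbouring atoms of $\mathcal{P}_n(f)$; applying the same argument to $g$ and to $h(U)$, which is the union of the combinatorially corresponding atoms of $\mathcal{P}_n(g)$, the sizes $|h(x+t)-h(x)|$ and $|h(x)-h(x-t)|$ are both comparable to the common length $|h(U)|$, with constants depending only on $K(N,d)$. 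Taking $t\to 0$ (equivalently $n\to\infty$) and then the $\limsup$ defining $\sigma_h(x)$, the bound $\sigma_h(x)\leq K$ follows with $K=K(N,d)$, uniformly in $x$, $f$, and $g$.

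The main obstacle I anticipate is the second step: ensuring that the beau comparability between adjacent atoms of $\mathcal{P}_n(f)$ really is universal for \emph{all} adjacent pairs, not only those that meet a critical point. This is where one must use a Koebe/cross-ratio distortion argument carefully, because long iterates of $f$ may cross several critical points; the extension from scaling ratios at the critical points to global bounded geometry of $\mathcal{P}_n(f)$ is what makes the passage from Theorem \ref{scalingratios} to the corollary nontrivial. Once this is in place, the quasi-symmetry bound follows cleanly.
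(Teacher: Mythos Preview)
Your plan is correct and is essentially the same argument the paper gives: the paper packages the step ``beau real bounds on adjacent atoms of $\mathcal{P}_n$ $\Rightarrow$ universal bound on $\sigma_h$'' through the \emph{fine grid} machinery of \cite{EdF} (Definition~5.1 and Propositions~5.1--5.2 there), first bounding $\sigma_h$ on the neighbourhood $I_{n_0}\cup I_{n_0+1}$ of one critical point and then spreading to all of $S^1$ via Theorem~\ref{CRIuniversal}, whereas you invoke the full beau bounds (Theorem~\ref{teobeau}) directly on all adjacent pairs. Your identification of the main obstacle is exactly right: going from beau scaling ratios at the critical points to beau comparability of \emph{all} adjacent atoms is precisely the content of Theorem~\ref{teobeau}, which the paper proves via Theorem~\ref{CRIuniversal}.

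One correction: you write that the threshold is $n_0=n_0(N,d,\varepsilon)$, independent of $f$. It is not --- in Theorem~\ref{teobeau} only the comparability constant $B=B(N,d)$ is universal, while $n_0=n_0(f)$ depends on the map. This does not damage your argument, because $\sigma_h(x)$ is a local quantity: once $t$ is small enough the maximal level $n$ you choose exceeds both $n_0(f)$ and $n_0(g)$, so the universal constant $B$ governs the ratio in the $\limsup$. Just be sure to phrase the step accordingly.
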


The fact that the conjugacy $h$ is {\it quasi-symmetric\/}, in the above corollary, is the main theorem proved in \cite{EdF}. The improvement 
here is that the quasi-symmetric distortion of $h$ is {\it asymptotically universal\/}. We provide a sketch of the proof of 
Corollary \ref{coro} in Section \ref{sec:beaubounds}.

This paper is organized as follows: in Section \ref{prelim}, we recall some well-know facts and state our main
results (see Section \ref{statements}). In Section \ref{secC1bounds} we establish $C^1$-bounds for suitable return maps around a critical 
point, while in Section \ref{secnegscwarz} we prove that these return maps have negative Schwarzian derivative. In Section 
\ref{sec:beaubounds} we prove Theorem \ref{teobeau}, Theorem \ref{CRIuniversal}, Theorem \ref{scalingratios} and Corollary \ref{coro}. 
Finally, in Appendix \ref{appA}, we provide proofs of some auxiliary results stated and used along the text.

\section{Preliminaries and statements of results}\label{prelim}

In this section we review some classical tools of one-dimensional dynamics that will be used along the text, and we state our main results
(see Section \ref{statements}).

\subsection{Cross-ratios} Given two intervals $M\subset T\subset S^{1}$ with $M$ compactly contained in $T$ (written $M\Subset T$) let us denote by $L$ and $R$ the two connected components of $T\setminus M$. We define the \emph{cross-ratio} of the pair $M,T$ as follows:
\begin{equation*}
[M,T]= \frac{|L|\,|R|}{|L\cup M|\,|M \cup R|} \in (0,1).
\end{equation*}

The cross-ratio is preserved by M\"obius transformations. Moreover, it is weakly contracted by maps with negative Schwarzian derivative (see Lemma \ref{contracts} below).

Let $f:S^{1}\to S^{1}$ be a continuous map, and let $U\subseteq S^{1}$ be an open set such that $f|_{U}$ is a 
homeomorphism onto its image. If $M\subset T\subset U$ are intervals, with $M\Subset T$,  
the \textit{cross-ratio distortion} of the map $f$ on the pair of intervals $(M,T)$ is defined to be the ratio
\begin{equation*}
\crd(f;M,T)= \frac{\big[f(M),f(T)\big]}{[M,T]}.
\end{equation*}

If $f|T$ is a M\"obius transformation, then we have that $\crd(f;M,T)=1$. When $f|T$ is a diffeomorphism onto its image and $\log{Df}|T$ has {\it bounded variation\/} in $T$ (for instance, if $f$ is a $C^2$ diffeomorphism), we obtain $\crd(f;M,T)\leq e^{2V}$, where $V=\mathrm{Var}(\log{Df}|T)$.   
We shall use the following chain rule in iterated form:
\begin{equation}\label{CRIchainrule}
 \crd(f^j;M,T) = \prod_{i=0}^{j-1} \crd(f;f^{i}(M), f^{i}(T))\ .
\end{equation}

\subsection{Distortion and the Schwarzian}
If $f:T \to f(T)$ is a $C^1$ diffeomorphism, we define its \textit{distortion} by 
\[\dist(f,T)=\sup_{x,y \in T}\dfrac{|Df(x)|}{|Df(y)|}\,.\]

Note that $\dist(f,T)=1$ if, and only if, $f$ is an affine map on $T$. In any other case we have $\dist(f,T)>1$. By the Mean Value Theorem, 
we have the following fact. 

\begin{remark}\label{remdistandcrd} If $\dist(f,T)<1+ \varepsilon$, then $\crd(f;M,T)<(1+\varepsilon)^2$ for any $M \subset T$.
\end{remark}

Recall that for a given $C^3$ map $f$, the \textit{Schwarzian derivative} of $f$ is the differential operator defined for all $x$ 
 regular point of $f$ by:
 \begin{equation*}
  Sf(x)= \dfrac{D^{3}f(x)}{Df(x)} - \dfrac{3}{2} \left( \dfrac{D^{2}f(x)}{Df(x)}\right)^{2}.
 \end{equation*}

The relation between the Schwarzian derivative and cross-ratio distortion is given by the following well known fact.

\begin{lemma}\label{contracts} If $f$ is a $C^3$ diffeomorphism with $Sf<0$, then for any two intervals $M\subset T$ contained in the domain
of $f$ we have $\crd(f;M,T)<1$, that is, $\big[f(M),f(T)\big]<[M,T]$.
\end{lemma}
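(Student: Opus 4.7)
The plan uses the two characteristic features of the Schwarzian—that Möbius transformations have $S\equiv 0$ and preserve cross-ratios—to reduce the inequality to a statement about fixed points, which is then resolved by the minimum principle for negatively Schwarzian maps.

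First, write $T = [a, d]$ and $M = [b, c]$ with $a < b < c < d$. I would let $\phi$ be the unique Möbius transformation sending $f(a), f(b), f(c)$ back to $a, b, c$ respectively, and set $g = \phi\circ f$. The Schwarzian chain rule gives $Sg = Sf < 0$ (since $S\phi \equiv 0$), while Möbius invariance of the cross-ratio gives $[g(M), g(T)] = [f(M), f(T)]$. A direct algebraic expansion then yields
$$[g(M), g(T)] - [M, T] \;=\; \frac{(b-a)(c-b)}{(c-a)(d-b)\bigl(g(d)-b\bigr)}\bigl(g(d) - d\bigr),$$
so that $\crd(f; M, T) < 1$ is equivalent to showing $g(d) < d$ for every $d > c$ in the domain of $g$.

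To prove $g(d) < d$, the key tool is the minimum principle for negatively Schwarzian maps: $Dg$ cannot attain a positive interior local minimum, since at such a point $x_0$ one would have $D^2 g(x_0) = 0$ and $D^3 g(x_0) \geq 0$, forcing $Sg(x_0) \geq 0$. Two applications of the mean value theorem produce points $\xi_1 \in (a, b)$ and $\xi_2 \in (b, c)$ with $Dg(\xi_i) = 1$. The minimum principle then forces $Dg > 1$ strictly on $(\xi_1, \xi_2)$, and a further application rules out a fourth fixed point of $g$ beyond $c$: such a point would produce a third $\xi_3 \in (c, d_0)$ with $Dg(\xi_3) = 1$, trapping $\xi_2$ as a forbidden interior positive local minimum of $Dg$ on $[\xi_1, \xi_3]$. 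Combined with a Taylor computation showing $\lim_{d\to c^+}\crd(g; M, T_d) = g'(c)$ and a parallel minimum-principle argument on $[\xi_2, c]$ forcing $g'(c) < 1$, this gives $\crd(g; M, T_d) < 1$ in a right neighborhood of $c$; since $\crd$ never equals $1$ for $d > c$ (by the no-fourth-fixed-point argument), continuity extends the strict inequality to all $d > c$ in the domain.

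The main technical obstacle is this sign determination: the minimum principle only rules out certain configurations a priori, and one must carefully exclude degenerate scenarios—most notably $g'(c) = 1$ or $Dg \equiv 1$ on a nontrivial subinterval. These are handled by the observation that $Dg \equiv 1$ on any interval of positive length would force $g$ to coincide with the identity there, yielding $Sg = 0$ in contradiction to hypothesis, so the strict inequality $g'(c) < 1$ is genuine and the conclusion follows.
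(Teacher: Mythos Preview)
Your proof is correct and uses essentially the same approach as the paper: post-compose with a M\"obius transformation to fix three of the four endpoints, then invoke the minimum principle for maps with negative Schwarzian to control the position of the fourth. The only differences are cosmetic---the paper normalizes so that $a,b,d$ are fixed and shows $g(c)>c$ by an immediate contradiction (if $g(c)\le c$, the mean value theorem produces three points directly violating the minimum principle), whereas you fix $a,b,c$ and reach $g(d)<d$ by the slightly longer route of ruling out a fourth fixed point, establishing $g'(c)<1$, and then invoking continuity; your detour could be replaced by the same one-step contradiction.
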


See Appendix \ref{appA} for a proof.

\subsection{Multicritical circle maps} 
Let us now define the maps which are the main object of study in the present paper. We start with the notion of {\it non-flat critical point\/}. 

\begin{definition}\label{naoflat} We say that a critical point $c$ of a one-dimensional $C^3$ map $f$ is \emph{non-flat} of degree $d>1$ if there exists 
a neighborhood $W$ 
of the critical point such that $f(x)=f(c)+\phi(x)\big|\phi(x)\big|^{d-1}$ for all $x \in W$, where $\phi : W \rightarrow \phi(W)$ is 
a $C^{3}$ diffeomorphism such that $\phi(c)=0$. The number $d$ is also called the \textit{criticality}, 
the \textit{type} or the \textit{order} of $c$. 
\end{definition}

We recall here, the following facts about the geometric behaviour of a map near a non-flat critical point. 

\begin{lemma} \label{lemacalculo} Given $f$ with a non-flat critical point $c$ of degree $d>1$ there exists a neighborhood 
$U \subseteq W$ of $c$ such that:
\begin{enumerate}
 \item\label{itemSf} $f$ has negative Schwarzian derivative on $U \setminus \{ c \}$. More precisely, there exists $K=K(f)>0$ such that
 for all $x \in U \setminus \{ c \}$ we have: $$Sf(x)< -\frac{K}{(x-c)^2}\,.$$ 
 \item\label{itemDf} There exist constants $0<\alpha< \beta$ such that for all $x \in U$
 \begin{equation*}
  \alpha|x-c|^{d-1} < Df(x) < \beta |x-c|^{d-1}.
 \end{equation*}
 Moreover, $\alpha$ and $\beta$ can be chosen so that $\beta<(3/2)\alpha$.
 \item\label{itemdist} Given a non-empty interval $J \subseteq U$ and $x \in J$ we have
 \begin{equation*}
  Df(x) \leq 3d\,\dfrac{|f(J)|}{|J|}\,.
 \end{equation*}
 \item\label{itemcross} Given two non-empty intervals $M \subseteq T \subseteq U$ we have:$$\crd(f;M,T)\leq 9d^2\,.$$  
\end{enumerate}
\end{lemma}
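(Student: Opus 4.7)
The plan is to base the whole lemma on the defining factorization $f(x)-f(c) = g(\phi(x))$, where $g(y) = \operatorname{sgn}(y)|y|^d$ is the pure power-law model and $\phi$ is the $C^3$ diffeomorphism furnished by Definition \ref{naoflat} (with $\phi(c)=0$). A direct differentiation yields $Df(x) = d|\phi(x)|^{d-1}\,D\phi(x)$, from which part (\ref{itemDf}) is immediate: the quantities $|\phi(x)|/|x-c|$ and $D\phi(x)$ are continuous and nonzero at $x=c$, so on a sufficiently small neighborhood $U$ both remain arbitrarily close to $|D\phi(c)|$, forcing $\beta/\alpha$ to be as close to $1$ as desired and in particular $\beta < (3/2)\alpha$.

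For part (\ref{itemSf}), I would combine the chain rule for the Schwarzian, $Sf = \bigl(Sg\circ\phi\bigr)(D\phi)^2 + S\phi$, with the elementary computation
\[
  Sg(y) \;=\; -\,\frac{d^2-1}{2\,y^2}.
\]
The correction $S\phi$ is continuous on $U$ and hence bounded, while $(D\phi(x))^2/\phi(x)^2$ is comparable to $1/(x-c)^2$ with constants close to $1$ on small $U$. So the negative term dominates and delivers $Sf(x) < -K/(x-c)^2$ for some $K=K(f)>0$.

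For part (\ref{itemdist}) I would reduce to the pure power case: for $0\le a < b$ and any $y\in[a,b]$, the elementary inequality $(1-r)/(1-r^d)\le 1$ with $r=a/b$ gives $d\,y^{d-1}(b-a) \le d(b^d-a^d)$, which is exactly $Dg(y) \le d\,|g([a,b])|/(b-a)$. Transporting this back through $\phi$ costs a distortion factor bounded by $3/2$ by part (\ref{itemDf}) on a small enough $U$; the case when $c$ lies in the interior of $J$ is handled by first restricting to the half of $J$ on the same side of $c$ as $x$ and noting that $|f(J)|$ is at least the length of the image of that half. Collecting constants yields the advertised bound $3d$.

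Finally, for part (\ref{itemcross}) I would use the multiplicative chain rule
\[
  \crd(f;M,T)\;=\;\crd\bigl(g;\phi(M),\phi(T)\bigr)\cdot\crd(\phi;M,T),
\]
bounding the diffeomorphism factor by $e^{2\mathrm{Var}(\log D\phi,\,T)}$, which is close to $1$ on a small $U$. For the model factor, whenever $c\notin T$ one has $Sg<0$ on $\phi(T)$ and Lemma \ref{contracts} gives $\crd(g;\phi(M),\phi(T))\le 1$; otherwise I would split $\phi(T)$ at the origin and estimate directly in the model $y\mapsto \operatorname{sgn}(y)|y|^d$. This last step, producing the explicit $O(d^2)$ dependence when the cross-ratio straddles the critical point, is the main obstacle, since the negative-Schwarzian trick fails there and one must extract the $d^2$ from an explicit calculation on the pure power map across the origin.
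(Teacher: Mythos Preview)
Your treatment of parts (\ref{itemSf}), (\ref{itemDf}) and (\ref{itemdist}) follows the same route as the paper. One small wrinkle in your sketch of (\ref{itemdist}): when $c\in(a,b)$, restricting to ``the half of $J$ on the same side of $c$ as $x$'' only works if that half happens to be the longer one; otherwise the factor $|J|/|J'|$ is unbounded. The fix (which is what the paper does) is to always compare against the \emph{longer} half $[c,b]$, say with $|a-c|\le|b-c|$: then $|x-c|\le b-c$, $|J|\le 2(b-c)$, and $|f(J)|\ge \alpha(b-c)^d/d$, so $Df(x)|J|/|f(J)|\le 2\beta d/\alpha<3d$.

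The real gap is in part (\ref{itemcross}). Your factorization through $\phi$ and $g$ is sound when $c\notin T$, but---as you yourself flag---the case where the configuration straddles the critical point is left unfinished, and the direct computation you propose on the pure power map is genuinely messy. The paper bypasses this entirely with a one-line trick that you missed: part (\ref{itemcross}) is an \emph{immediate corollary} of part (\ref{itemdist}). By the Mean Value Theorem there exist $z_0\in L$ and $z_1\in R$ with $|f(L)|/|L|=Df(z_0)$ and $|f(R)|/|R|=Df(z_1)$, whence
\[
\crd(f;M,T)=\frac{Df(z_0)\,|L\cup M|}{|f(L\cup M)|}\cdot\frac{Df(z_1)\,|M\cup R|}{|f(M\cup R)|}.
\]
Since $z_0\in L\cup M$ and $z_1\in M\cup R$, each factor is bounded by $3d$ from (\ref{itemdist}), giving $(3d)^2=9d^2$ regardless of whether $c$ lies in $T$. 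This is the missing idea: once (\ref{itemdist}) is in hand, no separate analysis of the critical case is needed.
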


We postpone the proof of Lemma \ref{lemacalculo} to Appendix \ref{appA}.

\begin{definition}\label{def:multicritic} A \emph{multicritical circle map} is an orientation preserving $C^3$ circle homeomorphism having 
$N \geq 1$ critical points, all of which are non-flat in the sense of Definition \ref{naoflat}.
\end{definition}

Being a homeomorphism, a multicritical circle map $f$ has a well defined rotation number. We will focus on the case when $f$ has no periodic orbits. 
By a result of J.-C. Yoccoz \cite{yoccoz}, $f$ has no wandering intervals. More precisely, we have the following fundamental result. 

\begin{theorem}[Yoccoz \cite{yoccoz}]\label{yoccoztheorem} Let $f$ be a multicritical circle map with irrational rotation number $\rho$. 
Then $f$ is topologically conjugate to the rigid rotation $R_{\rho}$, i.e., there exists a homeomorphism $h: S^{1} \rightarrow S^{1}$ such that $h \circ f = R_{\rho} \circ h.$
\end{theorem}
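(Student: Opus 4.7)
The proof decomposes naturally into two stages. First, Poincar\'e's classical construction assigns to any orientation-preserving circle homeomorphism with irrational rotation number $\rho$ a continuous, monotone, degree-one map $h\colon S^1\to S^1$ satisfying $h\circ f=R_\rho\circ h$; one lifts $f$ to $\tilde f\colon\mathbb{R}\to\mathbb{R}$, extracts the uniform limit of $(\tilde f^n(x)-x)/n$ along one orbit, and assembles the semi-conjugacy from the resulting invariant cyclic order. The only way $h$ fails to be a homeomorphism is if there is a nondegenerate interval $J\subset S^1$ collapsed by $h$ to a point; such a $J$ is a \emph{wandering interval}, meaning $\{f^n(J)\}_{n\in\mathbb{Z}}$ are pairwise disjoint and $J$ is not asymptotic to a periodic orbit. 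So the theorem reduces to showing that $f$ admits no wandering intervals.

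To rule out a wandering interval $J$, I would argue by contradiction using the cross-ratio machinery reviewed in the paper. Pairwise disjointness of the iterates $f^n(J)$ forces $|f^n(J)|\to 0$, and the Denjoy-type bound $\sum_n|f^n(J)|\leq 1$ gives quantitative control on how often iterates of $J$ can revisit any given region. Choose a slightly larger wandering neighborhood $T\supset J$ and apply the iterated chain rule
\begin{equation*}
\crd(f^n;J,T)\;=\;\prod_{i=0}^{n-1}\crd\bigl(f;\,f^i(J),\,f^i(T)\bigr).
\end{equation*}
For indices $i$ at which $f^i(T)$ lies outside a fixed neighborhood of the critical set, $f$ restricts there to a $C^3$ diffeomorphism, so Remark 2.1 bounds the local cross-ratio distortion by $(1+\varepsilon_i)^2$ with $\sum_i\varepsilon_i<\infty$ by disjointness of the $f^i(T)$. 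For indices at which $f^i(T)$ sits inside a small neighborhood of a critical point, Lemma 2.2(1) gives $Sf<0$ there, so Lemma 2.1 yields $\crd\leq 1$; at the finitely many transitional passes near the boundary of these neighborhoods, Lemma 2.2(4) provides the crude but finite bound $9d^2$. Together these give a uniform lower bound $\crd(f^n;J,T)\geq c_0>0$, i.e.\ $[f^n(J),f^n(T)]\geq c_0[J,T]$ for all $n$. Combined with $|f^n(J)|\to 0$ and the pairwise disjointness of the orbit of $T$ inside $S^1$, this forces an incompatibility between the length ratios of $f^n(J)$, $f^n(T)$ and their cross-ratio bound, yielding the desired contradiction.

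The principal obstacle is the combinatorial bookkeeping near the critical points: without uniform bounds on the geometry of the dynamical partitions generated by the closest returns to each critical point, one cannot guarantee a priori that the transitional passes are few enough for the product above to stay bounded below. In the single-critical case this is supplied by the Herman--\'Swi\c{a}tek real bounds, and in the multicritical setting (the subject of the present paper) by the bounds of \cite{EdF}. A fully self-contained proof of Yoccoz's theorem therefore typically interleaves two inductions: one establishing real bounds for induced return maps defined away from narrow neighborhoods of each critical point, and another using those bounds to exclude wandering intervals globally. Making the two inductions compatible in the presence of several, possibly distinct, criticalities is the delicate point, and is what requires the non-flatness hypothesis in Definition 2.1 in an essential way.
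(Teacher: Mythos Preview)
The paper does not prove this theorem: it is stated as a result of Yoccoz and cited to \cite{yoccoz} (see also \cite[Chapter IV]{dMvS}). So there is no ``paper's own proof'' to compare against; I can only assess your sketch on its merits.

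Your two-stage strategy (Poincar\'e semiconjugacy, then exclusion of wandering intervals via cross-ratio control) is the right one, and your awareness that real bounds and the no-wandering argument must be developed in tandem is on target. However, the core of the argument has genuine gaps. First, the three estimates you list are all \emph{upper} bounds on the individual factors $\crd(f;f^i(J),f^i(T))$, so the chain rule \eqref{CRIchainrule} yields an upper bound $\crd(f^n;J,T)\leq C_0$, not the lower bound $\geq c_0$ you claim. The actual contradiction in Yoccoz's proof runs the other way: one chooses the configuration $J\subset T$ carefully (using closest-return combinatorics and the maximality of $J$ among wandering intervals) so that the cross-ratio $[f^{q_n}(J),f^{q_n}(T)]$ is forced to \emph{grow} without bound, contradicting the upper bound coming from the Cross-Ratio Inequality. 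Your final sentence (``forces an incompatibility between the length ratios\ldots'') gestures at this but does not supply the mechanism.

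Second, you assume you can pick ``a slightly larger wandering neighborhood $T\supset J$'' with pairwise disjoint iterates. If $J$ is a maximal wandering interval this is impossible by definition, and if $J$ is not maximal you lose the leverage needed to force the cross-ratio to diverge. What one actually does is take $T$ to be the convex hull of $J$ and certain nearby closest-return iterates of $J$; the family $\{f^i(T)\}$ then has bounded \emph{multiplicity} (not disjointness), which is exactly the hypothesis of the Cross-Ratio Inequality. Finally, as you yourself note, invoking the real bounds of \cite{EdF} here is circular, since those bounds (as stated in Theorem \ref{realbounds}) presuppose the dynamical partitions $\mathcal{P}_n$, whose very definition as partitions of the whole circle already uses Theorem \ref{yoccoztheorem}.
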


Given a family of intervals $\mathcal{F}$ on $S^{1}$ and a positive integer $m$, we say that $\mathcal{F}$ has {\it multiplicity of intersection 
at most $m$\/} if each $x\in S^{1}$ belongs to at most $m$ elements of $\mathcal{F}$. 

\begin{customthm}{} Given a multicritical critical circle map $f:S^1\to S^1$, there exists a constant $C>1$, depending only on $f$, 
such that the following holds. If $M_i\Subset T_{i} \subset S^1$, where $i$ runs through some finite set of indices $\mathcal{I}$, 
are intervals on the circle such that the family $\{T_i: i\in \mathcal{I}\}$ 
has multiplicity of intersection at most $m$, then 
 \begin{equation}\label{crossprod}
  \prod_{i \in \mathcal{I}} \crd(f;M_{i},T_{i}) \leq C^{m}\ .
 \end{equation}
\end{customthm}

The Cross-Ratio Inequality was obtained by \'Swi\c{a}tek in \cite{S}. Similar estimates were obtained before by Yoccoz in \cite{yoccoz}, 
on his way to proving Theorem \ref{yoccoztheorem} (see \cite[Chapter IV]{dMvS} for this and much more). In this paper we will improve the Cross-Ratio Inequality, obtaining \emph{universal} bounds (see Theorem \ref{CRIuniversal} in Section \ref{statements}).

\bigskip

As explained before, given two intervals $M\subset T\subset S^{1}$ with $M\Subset T$ (that is, $M$ is compactly contained in $T$), we denote by $L$ and $R$ the two connected components of $T\setminus M$. We define the \emph{space} of $M$ inside $T$ as the smallest of the ratios $|L|/|M|$ and $|R|/|M|$. If the space is $\tau>0$ we said that $T$ contains a \emph{$\tau$-scaled neighborhood} of $M$.

\begin{lemma}[Koebe distortion principle] \label{koebe}
For each $\ell,\tau>0$ and each multicritical circle map $f$ there exists a 
constant $K=K(\ell,\tau,f)>1$ of the form
\begin{equation}\label{constkoebe}
 K= \left(1 + \frac{1}{\tau} \right)^2 \exp (C_0\,\ell)\,,
\end{equation}
where $C_0$ is a constant depending only on $f$, with the following property. If
$T$ is an interval such that $f^k|_{T}$ is a diffeomorphism onto its image and if $\sum_{j=0}^{k-1} |f^j(T)|\leq \ell$, 
then for each interval $M\subset T$ for which $f^k(T)$ contains a $\tau$-scaled neighborhood
of $f^k(M)$ one has
\[
\frac{1}{K}\leq \frac{|Df^k(x)|}{|Df^k(y)|}\leq K
\]
for all $x,y\in M$. 
\end{lemma}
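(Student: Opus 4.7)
The plan is to combine a quantitative strengthening of the Cross-Ratio Inequality with the classical Koebe-type conversion from cross-ratio control to distortion control. First, I would establish the refined estimate
\[
\crd(f^k; M', T) \leq \exp(C_0\, \ell), \qquad \ell = \sum_{j=0}^{k-1}|f^j(T)|,
\]
for every subinterval $M' \subset T$, with $C_0 = C_0(f)$. By the chain rule \eqref{CRIchainrule} it suffices to bound each factor $\crd(f; f^j(M'), f^j(T))$ by $\exp(C_0 |f^j(T)|)$. Fix a neighborhood $U$ of $\crit(f)$, built from the neighborhoods provided by Lemma \ref{lemacalculo}. When $f^j(T) \subset U$, Lemma \ref{contracts} combined with Lemma \ref{lemacalculo}(\ref{itemSf}) gives $\crd(f; f^j(M'), f^j(T)) \leq 1$; when $f^j(T)$ lies in $V = S^1 \setminus U$, the map $f$ is a $C^3$ diffeomorphism there, so $\log Df$ is Lipschitz with a constant $C_1$ depending only on $f$, and a direct integral computation yields $\crd(f; f^j(M'), f^j(T)) \leq \exp(C_1 |f^j(T)|)$. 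Iterates for which $f^j(T)$ straddles $\partial U$ are handled via the local factorization $f = F \circ \phi$ from Definition \ref{naoflat}, where $F(y) = y|y|^{d-1}$ has negative Schwarzian (contribution $\leq 1$) and $\phi$ is a $C^3$ diffeomorphism (contribution of exponential-in-length type). Multiplying via \eqref{CRIchainrule} produces the desired $\exp(C_0\ell)$.

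Second, I would carry out the Koebe conversion. Fix $x,y \in M$, write $T = [a,b]$, $M = [p,q]$, and let $L = [a,p]$, $R = [q,b]$. By the Mean Value Theorem there exist $p^* \in L$ and $q^* \in R$ with $|Df^k(p^*)| = |f^k(L)|/|L|$ and $|Df^k(q^*)| = |f^k(R)|/|R|$. The $\tau$-scaled neighborhood hypothesis gives $|f^k(L)|, |f^k(R)| \geq \tau\,|f^k(M)|$, which controls these anchor derivatives in terms of $|f^k(M)|/|M|$. Applying the refined estimate from the first step to the four-point configurations $(p^*,x,y,q^*)$ compares $|Df^k(x)|$ and $|Df^k(y)|$ with the anchor values, and the geometric $\tau$-space condition contributes the factor $(1 + 1/\tau)^2$, yielding
\[
\frac{|Df^k(x)|}{|Df^k(y)|} \leq \left(1 + \frac{1}{\tau}\right)^2 \exp(C_0 \ell).
\]

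The principal obstacle lies in the straddling case of the first step: iterates for which $f^j(T)$ sits partly inside a critical neighborhood and partly outside. A crude bound using only Lemma \ref{lemacalculo}(\ref{itemcross}) would give a multiplicative constant per such iterate that could a priori accumulate and spoil the exponential-in-$\ell$ control. The resolution relies on the non-flat structure: the local factorization of $f$ as a pure power composed with a $C^3$ diffeomorphism separates the negative-Schwarzian contraction from the bounded-variation amplification, producing a single per-iterate bound of exponential-in-length type valid uniformly across all positions relative to the critical set. The finiteness of $\crit(f)$ and the compactness of the local data then let everything be absorbed into one constant $C_0 = C_0(f)$.
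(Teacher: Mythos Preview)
The paper does not supply its own proof of Lemma~\ref{koebe}: immediately after the statement it simply refers the reader to \cite[p.~295]{dMvS}. Your two-step scheme --- first prove the refined cross-ratio estimate $\crd(f^k;M',T)\le\exp(C_0\ell)$ via the chain rule \eqref{CRIchainrule} and a per-iterate bound, then convert cross-ratio control into the pointwise derivative bound with the geometric factor $(1+1/\tau)^2$ --- is exactly the classical argument recorded in that reference, so your proposal coincides with what the paper is invoking.

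One point deserves a touch more care. Your resolution of the straddling case tacitly assumes that whenever $f^j(T)$ crosses $\partial U$ it still lies inside the neighborhood $W$ of Definition~\ref{naoflat} on which the factorization $f=F\circ\phi$ is valid. That is automatic only when $|f^j(T)|$ is below the Lebesgue number $\lambda$ of the cover $\{W_1,\dots,W_N,\,S^1\setminus\overline{U}\}$; the (at most $\ell/\lambda$) iterates with $|f^j(T)|\ge\lambda$ must be accounted for separately and then absorbed into $C_0$. The tidiest way to organise this --- and the way it is usually done --- is to first pre-compose $f$ by a single global $C^3$ circle diffeomorphism $\Phi$ that agrees with each local $\phi_i$ near $c_i$; then $G=f\circ\Phi^{-1}$ is \emph{exactly} a pure power near each of its critical points, the negative-Schwarzian region and the bounded-nonlinearity region for $G$ genuinely overlap, and your trichotomy collapses to a dichotomy with no straddling case at all. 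With that small adjustment the sketch is complete.
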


A proof of the Koebe distortion principle can be found in \cite[p.~295]{dMvS}. 

\subsection{Combinatorics and real bounds}
Let $f$ be  a multicritical circle map, and let $c_0,c_1,\ldots, c_{N-1}$ be its critical points. 
As already mentioned in the introduction, we assume throughout that $f$ has no periodic points. Let $\rho$ be the rotation number of $f$. As we know, it has a infinite continued fraction expansion, say
\begin{equation*}
      \rho(f)= [a_{0} , a_{1} , \cdots ]=   
      \cfrac{1}{a_{0}+\cfrac{1}{a_{1}+\cfrac{1}{ \ddots} }} \ .
    \end{equation*}
    
A classical reference for continued-fraction expansion is the monograph \cite{khin}. Truncating the expansion at level $n-1$, we obtain a sequence
of fractions $p_n/q_n$ which are called the \emph{convergents} of the irrational $\rho$.
$$
\frac{p_n}{q_n}\;=\;[a_0,a_1, \cdots ,a_{n-1}]\;=\;\dfrac{1}{a_0+\dfrac{1}{a_1+\dfrac{1}{\ddots\dfrac{1}{a_{n-1}}}}}\ .
$$
Since each $p_n/q_n$ is the best possible approximation to $\rho$ by fractions with denominator at most $q_n$ \cite[Chapter II, Theorem 15]{khin},
we have:
\begin{equation*}
\text{If} \hspace{0.4cm} 0<q<q_n \hspace{0.3cm} \text{then} \hspace{0.3cm} \left|\rho-\frac{p_n}{q_n}\right|<\left|\rho-\frac{p}{q}\right|, 
\hspace{0.3cm} \text{ for any $p\in\nt$.}
\end{equation*} 
The sequence of numerators satisfies
\begin{equation*}
 p_0=0,  \hspace{0.4cm} p_{1}=1, \hspace{0.4cm} p_{n+1}=a_{n}p_{n}+p_{n-1} \hspace{0.3cm} \text{for $n \geq 1$}.
\end{equation*}
Analogously, the sequence of the denominators, which we call the \emph{return times}, satisfies 
\begin{equation*}
 q_{0}=1, \hspace{0.4cm} q_{1}=a_{0}, \hspace{0.4cm} q_{n+1}=a_{n}q_{n}+q_{n-1} \hspace{0.3cm} \text{for $n \geq 1$} .
\end{equation*}
For each point $x \in S^{1}$, the closed interval with endpoints $x$ and $f^{q_{n}}(x)$ containing the point $f^{q_{n+2}}(x)$ 
contains no other iterate $f^{j}(x)$ with $1 \leq j \leq q_{n}-1$. 

For each critical point $x \in S^1$ and each non-negative integer $n$, let $I_{n}(x)$ be the interval with endpoints 
$x$ and $f^{q_n}(x)$ containing  $f^{q_{n+2}}(x)$.. We write $I_{n}^{j}(x)=f^{j}(I_{n}(x))$ for all $j$ and $n$. 

\begin{lemma}\label{lemapartition} For each $n\geq 0$ and each $x\in S^1$, the collection of intervals
\[
 \mathcal{P}_n(x)\;=\; \left\{f^i(I_n(x)):\;0\leq i\leq q_{n+1}-1\right\} \;\bigcup\; \left\{f^j(I_{n+1}(x)):\;0\leq j\leq q_{n}-1\right\} 
\]
is a partition of the unit circle (modulo endpoints), called the {\it $n$-th dynamical partition\/} associated to the point $x$.
\end{lemma}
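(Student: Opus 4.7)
The plan is to reduce the statement to the rigid rotation via Yoccoz's theorem (Theorem \ref{yoccoztheorem}) and then argue combinatorially. Let $h: S^1\to S^1$ be a topological conjugacy with $h\circ f = R_\rho\circ h$. Since $I_n(x)$ is characterized purely in terms of the orbit of $x$ and its cyclic order on the circle, one checks immediately that $h(I_n(x))$ is the analogous interval $I_n(h(x))$ for $R_\rho$, and hence $h(\mathcal{P}_n^f(x))=\mathcal{P}_n^{R_\rho}(h(x))$. As ``partition modulo endpoints'' is a topological notion preserved by $h$, it suffices to prove the lemma for $f=R_\rho$.

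For the rigid rotation on $S^1=\mathbb{R}/\mathbb{Z}$, by rotational symmetry I would take $x=0$. I would then prove the claim by induction on $n$. The base case $n=0$ reduces to an elementary verification using $q_0=1$ and $q_1=a_0$: the intervals $R_\rho^i(I_0(0))$, $0\leq i<a_0$, tile consecutively an arc of total length $a_0\rho$ starting at $0$, and the complementary arc is precisely $I_1(0)$, with endpoints $0$ and $\{a_0\rho\}$.

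For the inductive step, I would assume $\mathcal{P}_n(0)$ is a partition and exhibit $\mathcal{P}_{n+1}(0)$ as a refinement. The key combinatorial ingredients are the recursion $q_{n+2}=a_{n+1}q_{n+1}+q_n$ together with the classical fact that $q_n\rho\bmod 1$ and $q_{n+1}\rho\bmod 1$ lie on opposite sides of $0$ and are the best one-sided approximations among iterates of length less than $q_{n+2}$. These facts, standard from the theory of continued fractions \cite{khin}, imply that inside each ``long'' interval $R_\rho^i(I_n(0))$ with $0\leq i<q_{n+1}$ the orbit introduces exactly $a_{n+1}$ new cuts corresponding to the iterates $R_\rho^{i+k q_{n+1}}(0)$ for $1\leq k\leq a_{n+1}$, splitting it into $a_{n+1}$ images of $I_{n+2}(0)$ and one image of $I_{n+1}(0)$; meanwhile the ``short'' intervals $R_\rho^j(I_{n+1}(0))$ with $0\leq j<q_n$ survive unchanged and now play the role of ``long'' intervals in $\mathcal{P}_{n+1}(0)$.

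The main obstacle is not any deep estimate but rather the bookkeeping of cyclic order: one must verify that the $a_{n+1}$ new cuts appear in the correct order along each long interval, and that none of the intervals in the two families overlap. Both facts follow from the best-approximation/alternating-side properties of the convergents $p_n/q_n$, which guarantee that $j\rho$ cannot lie closer to $0$ than $q_n\rho$ when $0<j<q_{n+1}$, and in particular translate into the disjointness modulo endpoints of all members of $\mathcal{P}_n(0)$.
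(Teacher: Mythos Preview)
Your reduction to the rigid rotation via Yoccoz's theorem is exactly how the paper begins as well. From that point, however, the two arguments diverge. The paper does not proceed by induction on $n$; instead it argues directly for a fixed $n$ in two steps. First it shows that all members of $\mathcal{P}_n$ are pairwise disjoint, using the closest-return property together with a short case analysis for the cross-family intersections $f^i(I_n)\cap f^j(I_{n+1})$. Then it computes the total length of the union as
\[
q_{n+1}\,|q_n\rho-p_n|\;+\;q_n\,|q_{n+1}\rho-p_{n+1}|\;=\;|q_np_{n+1}-q_{n+1}p_n|\;=\;1,
\]
invoking the classical identity $q_np_{n+1}-q_{n+1}p_n=(-1)^n$, which it proves separately by a one-line induction. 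Disjointness plus full Lebesgue measure then forces $\mathcal{P}_n$ to cover the circle.

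Your inductive refinement argument is a legitimate alternative, and arguably more geometric; it also yields as a byproduct the refinement structure $\mathcal{P}_{n+1}\preceq\mathcal{P}_n$ that the paper later uses without proof. Be aware, though, that two details in your sketch are swapped. A long interval $R_\rho^i(I_n)$ decomposes into $a_{n+1}$ images of $I_{n+1}$ and \emph{one} image of $I_{n+2}$, not the other way around, consistently with the length identity $|I_n|=a_{n+1}|I_{n+1}|+|I_{n+2}|$ for the rotation. Correspondingly, the new cut points inside $R_\rho^i(I_n)$ are $R_\rho^{\,i+q_n+kq_{n+1}}(0)$ for $1\leq k\leq a_{n+1}$; the extra $q_n$ is needed so that the first step lands inside $I_n$ rather than on the $I_{n+1}$ side of $R_\rho^i(0)$. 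With these two corrections your outline goes through.
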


See Appendix \ref{appA} for a proof. \\

For each $n$, the partition $\mathcal{P}_{n+1}(x)$ is a (non-strict) refinement of $\mathcal{P}_{n}(x)$, while the partition 
$\mathcal{P}_{n+2}(x)$ is a strict refinement of $\mathcal{P}_{n}(x)$.  

Let us focus our attention on one of the critical points only, say $c_0$, and on its associated dynamical partitions, namely $\mathcal{P}_{n}(c_{0})$ 
($n\geq 0$). 
To simplify the notation, we shall write below $\mathcal{P}_{n}$ instead of $\mathcal{P}_{n}(c_{0})$. Accordingly, the intervals $I_{n}^i(c_0)$
and $I_{n+1}^j(c_0)$ will be denoted by $I_{n}^i$ and $I_{n+1}^j$, respectively. Moreover, for a given $J\in\mathcal{P}_n$ 
we shall denote by $J^*$ the union of $J$ with its left and right neighbours in $\mathcal{P}_n$.
We may assume, for $n$ large enough, that no two critical points of $f$ are in the same atom of $\mathcal{P}_{n}$.

 \begin{theorem}[Real A-priori Bounds] \label{realbounds} 
 Let $f$ be a multicritical circle map. There exists a constant $C>1$ depending only of $f$ such that the following holds. 
For all $n\geq 0$ and for each pair of adjacent atoms $I, J\in \mathcal{P}_{n}$ we have
 \begin{equation}\label{compatom}
  C^{-1} {|J|} \leq |I| \leq C|J|.
 \end{equation}
In particular there exists $\mu=\mu(f) \in (0,1)$ such that, if $\mathcal{P}_{n+2}\ni \Delta \subset \Delta'\in \mathcal{P}_{n}$, then 
$|\Delta|<\mu |\Delta'|$ for all $n \in \nt$.
 \end{theorem}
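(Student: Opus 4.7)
The plan is to combine the Cross-Ratio Inequality with the Koebe distortion principle (Lemma \ref{koebe}) and the local geometric control near non-flat critical points provided by Lemma \ref{lemacalculo}, following the strategy used by \'Swi\c{a}tek and Herman in the unicritical case and adapted in \cite{EdF} to the multicritical setting. Since by Lemma \ref{lemapartition} each $\mathcal{P}_n$ consists of ``long'' atoms $I_n^i$ ($0\leq i\leq q_{n+1}-1$) and ``short'' atoms $I_{n+1}^j$ ($0\leq j\leq q_n-1$), I would first establish \eqref{compatom} for adjacent atoms of the same type, and then handle the mixed case separately.

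For atoms of the same type, note that the family $\{I_n^i\}_{0\leq i\leq q_{n+1}-1}$ has multiplicity of intersection at most $2$ on $S^1$, and likewise for $\{I_{n+1}^j\}$. Applying the Cross-Ratio Inequality to suitably chosen pairs $(M_i,T_i)$ along the orbit of $I_n$ (where $T_i$ consists of three consecutive atoms and $M_i$ is the middle one), the bounded multiplicity translates into a uniform bound on $\prod \operatorname{CrD}(f;M_i,T_i)$, which forces the length ratios of consecutive same-type atoms to be bounded by a constant depending only on $f$. The delicate comparison is between a long atom $I_n^i$ and an adjacent short atom $I_{n+1}^j$: such a pair shares an endpoint whose backward orbit reaches $c_0$ (the common vertex of $I_n$ and $I_{n+1}$), so it suffices to compare $|I_n|$ and $|I_{n+1}|$ themselves. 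This I would do by pulling back through the first closest return map near $c_0$, using Lemma \ref{lemacalculo}(2) so that the non-flat, power-law behavior at $c_0$ controls the relative lengths of $I_n$ and $I_{n+1}$, while the Koebe principle (applied to the diffeomorphic segments of the return orbit, whose total length is bounded thanks to Yoccoz's theorem) absorbs distortion along the rest of the itinerary. Because $f$ has only finitely many critical points, only finitely many ``seams'' per level require this critical comparison, and the rest propagate from them via the same-type estimate.

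The contraction statement then follows almost for free. Suppose $\Delta\in\mathcal{P}_{n+2}$ is contained in $\Delta'\in\mathcal{P}_n$ and $\mathcal{P}_{n+2}$ strictly refines $\mathcal{P}_n$ on $\Delta'$. Then $\Delta'$ is partitioned by at least two atoms of $\mathcal{P}_{n+2}$, and \eqref{compatom} applied at level $n+2$ shows that any two of these atoms have commensurate lengths, bounded by the same $C$. Consequently the largest such sub-atom occupies at most a fraction $\mu=C/(C+1)<1$ of $|\Delta'|$, which gives the required contraction.

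The main obstacle is the mixed comparison between a long and a short adjacent atom, since this is where the multicritical feature matters. In the unicritical case a single critical point anchors every seam, so one argument suffices; in the multicritical setting one must carefully track which critical point is responsible for each boundary between $I_n^i$ and $I_{n+1}^j$ in $\mathcal{P}_n$, and ensure that the distortion estimate along the orbit absorbs the contributions of every critical point visited. Lemma \ref{lemacalculo}(4), giving a uniform bound $9d^2$ on the single-step cross-ratio distortion across a critical point, is precisely what allows the Cross-Ratio Inequality to swallow all such crossings at a cost exponential in the multiplicity, keeping the final bound $C$ dependent only on $f$.
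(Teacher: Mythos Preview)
The paper does not prove Theorem~\ref{realbounds}; it is stated as a known result, attributed to Herman and \'Swi\c{a}tek, with the multicritical version cited from \cite{EdF}. So there is no proof in the paper to compare against. Your outline is broadly in the spirit of the cited references (Cross-Ratio Inequality, Koebe, local power-law control), but two points in your sketch deserve attention.

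First, there is a potential circularity in the mixed-atom step. You propose to compare $|I_n|$ with $|I_{n+1}|$ by pulling back through the return map and invoking the Koebe distortion principle on the diffeomorphic stretches. But Lemma~\ref{koebe} requires that the image contain a $\tau$-scaled neighborhood of $f^k(M)$, and the existence of such definite space is exactly part of what Theorem~\ref{realbounds} asserts; bounded total length alone (from Yoccoz's theorem) controls only the exponential factor in \eqref{constkoebe}, not the $(1+1/\tau)^2$ factor. The proofs in \cite{EdF} and \cite{dFdM} avoid this by running the Cross-Ratio Inequality directly on the orbit (it needs only bounded multiplicity, no space hypothesis), and Koebe is used only \emph{after} the real bounds are in hand. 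Your sketch should replace the Koebe step here by a direct cross-ratio argument.

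Second, in the contraction argument you write that ``any two of these atoms have commensurate lengths, bounded by the same $C$'', but \eqref{compatom} only compares \emph{adjacent} atoms; if $\Delta'$ contains many atoms of $\mathcal{P}_{n+2}$, non-adjacent ones need not be $C$-comparable. Your conclusion $\mu=C/(C+1)$ is nevertheless correct, for the simpler reason that $\Delta$ has at least one $\mathcal{P}_{n+2}$-neighbor also contained in $\Delta'$, so $|\Delta|+C^{-1}|\Delta|\le|\Delta'|$.
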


When we get the inequalities in \eqref{compatom} for two atoms $I$ and $J$, we will say that they are {\it comparable\/}, which will be denoted by $|I| \asymp |J|$. Thus the above theorem is saying that {\it any two adjacent atoms of a dynamical partition of $f$ are comparable\/}.\\

Note that for a rigid rotation we have $|I_n|=a_{n+1}|I_{n+1}|+|I_{n+2}|$. If $a_{n+1}$ is big, then $I_n$ is much larger than $I_{n+1}$. Thus, even for rigid rotations, real bounds do not hold in general.

Theorem \ref{realbounds} was obtained by Herman \cite{H}, based on estimates by \'Swi\c{a}tek \cite{S}. Further proofs are to be
found in \cite{dFdM} and \cite{P} for the case of a single critical point, and in \cite{EdF} for the general case.

\subsection{Statements of main results}\label{statements} Our main goal in the present paper is to establish the following two results,
which immediately imply Theorem \ref{scalingratios}.

\begin{maintheorem}[Beau bounds]\label{teobeau} Given $N\geq 1$ in $\nt$ and $d>1$ there exists a constant $B=B(N,d)>1$ with
the following property: given a multicritical circle map $f$, with at most $N$ critical points whose criticalities are bounded by $d$, there
exists $n_0=n_0(f)\in\nt$ such that for all $n \geq n_0$ and for any adjacent intervals $I$ and $J$ in $\mathcal{P}_{n}$
 we have:
 \[ 
  \dfrac{|J|}{B} \leq |I| \leq B|J|.
 \]
\end{maintheorem}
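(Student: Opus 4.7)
The plan is to upgrade the Real A-priori Bounds (Theorem \ref{realbounds}), whose comparability constant $C$ depends on $f$, into a constant $B$ depending only on $N$ and $d$, at the price of requiring $n \geq n_0(f)$. The engine for this upgrade will be the two technical inputs established in the two sections that precede the proof: universally bounded $C^1$-distortion for suitable first-return maps around each critical point, and the fact that these return maps have negative Schwarzian derivative. Together they allow the Koebe distortion principle (Lemma \ref{koebe}) to be applied with constants that no longer depend on the particular $f$, once $n$ is large enough that the relevant orbits live at scales where the return-map picture is essentially in force.

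Concretely, I would fix a critical point $c_0$, focus on the partitions $\mathcal{P}_n(c_0)$, and exploit the fact that every atom $J \in \mathcal{P}_n$ is an $f$-iterate of either $I_n(c_0)$ or $I_{n+1}(c_0)$. Comparability of adjacent atoms is preserved, up to bounded factors, by any $f$-iterate acting as a diffeomorphism with controlled distortion. The steps are then: (i) decompose the relevant iterates into diffeomorphic stretches interspersed with near-critical passages; (ii) control each near-critical passage by a factor depending only on $d$, using the non-flat model of Definition \ref{naoflat}; (iii) apply Koebe on each diffeomorphic stretch, noting that the total orbit length entering the exponent of \eqref{constkoebe} is bounded using only the real bounds, while the constant $C_0$ is rendered universal in $f$ by the $C^1$-bounds on the return maps; (iv) combine the local estimates via the chain rule \eqref{CRIchainrule} and cross-ratio contraction coming from negative Schwarzian (Lemma \ref{contracts}), to propagate the comparability of $I_n$ and $I_{n+1}$ at $c_0$ out to every atom of $\mathcal{P}_n$. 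Taking $n_0$ large enough so that no two critical points of $f$ lie in a common atom of $\mathcal{P}_{n_0}$, and so that the return-map approximation has converged within a universal tolerance, then yields a constant $B = B(N,d)$ of the required form.

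The main obstacle, and the source of the genuine difference with the single-critical case of \cite{dFdM}, is that the first-return map around $c_0$ generically visits the remaining critical points $c_1, \dots, c_{N-1}$ before returning, so it is itself a composition of several non-flat unimodal-like branches. One must verify that the $C^1$ and Schwarzian controls survive this composition with constants depending only on $N$ and $d$, not on the relative positions of the $c_i$ on $S^1$ nor on the specific $f$. This is precisely the point at which the hypotheses on $N$ and $d$ must be used quantitatively: the composition has at most $N$ critical factors, each of criticality at most $d$, and an inductive bootstrap starting from the real bounds is required to make all multiplicative errors universal. I expect the heart of the argument — and hence the hardest part to arrange — to be this uniform handling of several critical passages simultaneously, rather than the Koebe-based propagation in the last step, which should be essentially the same as in the one-critical-point setting.
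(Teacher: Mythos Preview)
Your broad outline --- decompose the relevant iterate into diffeomorphic stretches and at most $O(N)$ near-critical passages, control the latter by factors depending only on $d$, control the former via Koebe, and reassemble via the chain rule for cross-ratios --- is exactly the skeleton the paper uses (Lemma~\ref{lemadecomp} leading to Theorem~\ref{CRIuniversal}, which then plugs into the real-bounds argument of \cite{EdF}). So at the level of architecture you are aligned with the paper.

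There is, however, a real gap in step (iii), and it is precisely where you feel most secure. You write that ``the total orbit length entering the exponent of \eqref{constkoebe} is bounded using only the real bounds, while the constant $C_0$ is rendered universal in $f$ by the $C^1$-bounds on the return maps.'' Neither half of this works. The constant $C_0$ in Lemma~\ref{koebe} is a property of $f$ itself (essentially a second-order nonlinearity bound), and nothing in Section~\ref{secC1bounds} makes it universal; indeed Lemma~\ref{keyLemma} gives $K=K(f)$, not $K(N,d)$. And merely \emph{bounding} $\ell$ by the real bounds leaves you with $\exp(C_0\ell)$ still $f$-dependent. The paper's mechanism is different and more delicate: in the proof of Lemma~\ref{lemadecomp} one introduces an \emph{intermediate} partition level $m\approx(n+n_1)/2$ between the fixed level $n_1$ (where negative Schwarzian begins) and the current level $n$. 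The geometric decay between levels then forces both $\ell<\delta$ and $1/\tau<\delta$ for any prescribed $\delta>0$, once $n\geq n_2(f,\delta)$. Thus $C_0$ stays $f$-dependent, but $\exp(C_0\ell)(1+1/\tau)^2<1+\varepsilon$ anyway. Without this two-scale trick your Koebe step does not yield a universal constant.

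You have also inverted the difficulty. The multiple critical passages are the \emph{easy} part: each contributes at most $9d^2$ to the cross-ratio distortion by Lemma~\ref{lemacalculo}\eqref{itemcross}, there are at most $3N$ of them, and $(9d^2)^{3N}$ is manifestly universal in $(N,d)$. The genuine subtlety is making the diffeomorphic pieces universal, which is exactly the point you treat as routine. The role of the $C^1$ bounds (Section~\ref{secC1bounds}) in the paper is not to tame Koebe directly but to feed into the proof of the negative-Schwarzian property (Lemma~\ref{negschwarz}), which in turn furnishes the third type of factor in the decomposition of Lemma~\ref{lemadecomp}.
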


\begin{maintheorem}\label{CRIuniversal} Given $N\geq 1$ in $\nt$ and $d>1$ there exists a constant $B=B(N,d)>1$ with
the following property: given a multicritical circle map $f$, with at most $N$ critical points whose criticalities are bounded by $d$, there
exists $n_0=n_0(f)$ such that for all $n \geq n_0$, $\Delta\in\mathcal{P}_n$ and $k\in\nt$ such that $f^j(\Delta)$ is contained in an
element of $\mathcal{P}_n$ for all $1\leq j \leq k$, we have that:$$\crd(f^k;\Delta,\Delta^*)\leq B\,.$$
\end{maintheorem}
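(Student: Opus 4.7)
The plan is to estimate $\crd(f^k;\Delta,\Delta^*)$ via the chain rule \eqref{CRIchainrule} by separating the iterates into those that come close to a critical point and those that stay far from all critical points, and then using the universal $C^1$ and negative-Schwarzian bounds for return maps that will be established in Sections \ref{secC1bounds}--\ref{secnegscwarz}. Writing $\Delta_j:=f^j(\Delta)$ and $\Delta_j^*:=f^j(\Delta^*)$, and fixing small neighborhoods $U_i$ of each critical point $c_i$ of $f$ (chosen so that the normal form of Definition \ref{naoflat} and Lemma \ref{lemacalculo} apply), I would let $J_{\mathrm{crit}}=\{j:\Delta_j^*\cap\bigcup_i U_i\neq \emptyset\}$ and $J_{\mathrm{reg}}$ be its complement in $\{0,\ldots,k-1\}$.

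For $j\in J_{\mathrm{crit}}$, Lemma \ref{lemacalculo}(\ref{itemcross}) furnishes the universal per-step bound $\crd(f;\Delta_j,\Delta_j^*)\leq 9d^2$. The cardinality of $J_{\mathrm{crit}}$ is bounded in terms of $N$ alone: the hypothesis that each $\Delta_j$ sits in a single atom of $\mathcal{P}_n$, combined with the fact that for $n\geq n_0(f)$ distinct critical points live in distinct atoms, forces the forward orbit to cross each $U_i$ only a bounded number of times before leaving its host atom, and the real bounds of Theorem \ref{realbounds} quantify this into an estimate $|J_{\mathrm{crit}}|\leq c(N)$.

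For $j\in J_{\mathrm{reg}}$, the iterate $f|_{\Delta_j^*}$ is a diffeomorphism that avoids the critical set, but a direct appeal to $\mathrm{Var}(\log Df)$ or the Koebe principle of Lemma \ref{koebe} would bring in the non-universal constant $C_0=C_0(f)$. To bypass this, I would regroup consecutive regular iterates lying between two successive critical passages and identify the corresponding composition $f^{k_2-k_1}$ with an iterate of a first-return map $R_n$ around a suitable critical point. By Sections \ref{secC1bounds}--\ref{secnegscwarz}, for $n\geq n_0(f)$ the map $R_n$ has $C^1$ geometry and negative Schwarzian derivative with constants depending only on $N$ and $d$, so Lemma \ref{contracts} gives $\crd\leq 1$ on each such block (or at worst a universal constant after accounting for boundary effects, using the Koebe space supplied by Theorem \ref{realbounds}). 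Multiplying all contributions yields $\crd(f^k;\Delta,\Delta^*)\leq (9d^2)^{c(N)}=:B(N,d)$, as required.

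The principal obstacle is the step for regular iterates: a generic $C^3$ multicritical map need not have negative Schwarzian away from its critical points, so Lemma \ref{contracts} cannot be applied to $f$ itself. The resolution lies in the fact -- established in Section \ref{secnegscwarz} -- that deep-level return maps inherit negative Schwarzian from the local power-law behaviour at each critical point, once the intervening non-critical composition has been absorbed into the local normal form. Equally delicate is the combinatorial bookkeeping required to match the $k$ iterates of $f$ with an integer number of $R_n$-iterates and to absorb the terminal piece into a universal constant.
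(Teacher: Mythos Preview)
Your proposal has a genuine gap in the bound $|J_{\mathrm{crit}}|\leq c(N)$. With $U_i$ chosen as \emph{fixed} neighborhoods of the critical points (as you must, for Lemma~\ref{lemacalculo} to apply), the set $J_{\mathrm{crit}}=\{j:\Delta_j^*\cap\bigcup_i U_i\neq\emptyset\}$ is \emph{not} bounded independently of $n$: as $n\to\infty$ each $U_i$ contains arbitrarily many atoms of $\mathcal{P}_n$, and the orbit $\{\Delta_j\}_{0\leq j<k}$ (with $k$ as large as $q_{n+1}-1$) visits every one of them. Hence $|J_{\mathrm{crit}}|\to\infty$ and the product $(9d^2)^{|J_{\mathrm{crit}}|}$ blows up. Your justification conflates ``$\Delta_j^*$ meets $U_i$'' with ``$\Delta_j^*$ contains $c_i$''; only the latter happens at most $3N$ times.

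The companion difficulty is on the regular side. If you instead shrink $J_{\mathrm{crit}}$ to the $\leq 3N$ times when $\Delta_j^*$ actually contains a critical point, the blocks in $J_{\mathrm{reg}}$ now include long stretches where $\Delta_j^*$ lies far from every critical point, and Lemma~\ref{negschwarz} gives nothing there: it only controls iterates \emph{starting} inside $I_{n_1}(c)\cup I_{n_1+1}(c)$, not iterates starting in the complement. Nor does Koebe help universally, since its constant \eqref{constkoebe} depends on $f$ through $C_0$. The paper resolves both issues at once via a three-level scheme (Lemma~\ref{lemadecomp}): one tracks not $\Delta$ but the much larger atom $J_{n_1}\in\mathcal{P}_{n_1}$ containing it, declaring a time $i$ ``critical'' when $f^i(J_{n_1})$ contains a critical point --- this happens at most $3N$ times. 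Between such times, $f^s$ is a diffeomorphism on $J_m^*$ whose distortion on $\Delta^*$ is forced below $1+\varepsilon$ by Koebe, using the intermediate level $m=\lfloor(n+n_1)/2\rfloor$ to create space $\tau\to\infty$ and total length $\ell\to 0$ simultaneously (so the $f$-dependent constant $C_0$ is washed out). At a critical time itself one applies either a single iterate of $f$ (if $c\in f^i(\Delta^*)$, giving $\crd\leq 9d^2$) or Lemma~\ref{negschwarz} (if not, since then $f^i(\Delta^*)\subset I_{n_1}(c)\cup I_{n_1+1}(c)$, giving $\crd\leq 1$). The universal bound $B=(1+\varepsilon)^{2(3N+1)}(9d^2)^{3N}$ then follows immediately.
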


The proof of the beau bounds (Theorem \ref{teobeau}) is the same as the proof of the real bounds (Theorem \ref{realbounds}) given by the 
first two authors in \cite[Section 3, p. 8-16]{EdF}, but replacing the Cross-Ratio Inequality with Theorem \ref{CRIuniversal}. In other
words, Theorem \ref{teobeau} follows directly from Theorem \ref{CRIuniversal}. The remainder of this paper is devoted to proving Theorem 
\ref{CRIuniversal}. Its proof will be given in Section \ref{sec:beaubounds}.
 
\section{The $C^1$ bounds}\label{secC1bounds}

In this section we prove the following result. 

\begin{lemma}\label{keyLemma} Given a multicritical circle map $f$ there exist two constants $K=K(f)>1$ and 
$n_{0}=n_{0}(f) \in \nt$ such that for all $n >n_{0}$, $x \in I_{n}$ and $j \in \{0,1, \cdots, q_{n+1} \}$, we have
 \begin{equation}\label{keylemma}
  Df^{j}(x) \leq K\,\dfrac{|f^{j}(I_{n})|}{|I_{n}|}\,.
 \end{equation}
\end{lemma}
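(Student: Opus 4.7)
By the mean value theorem, $|f^j(I_n)|/|I_n| = Df^j(\xi)$ for some $\xi \in I_n$, so inequality \eqref{keylemma} reduces to a uniform upper bound on the one-sided distortion ratio $Df^j(x)/Df^j(\xi)$ for $x \in I_n$. My strategy is to split $\{0, 1, \ldots, j-1\}$ into \emph{critical times}---those $i$ for which $f^i(I_n)^*$ meets $\crit(f)$---separated by non-critical \emph{stretches}, estimating the derivative at critical times via part (\ref{itemdist}) of Lemma \ref{lemacalculo}, and over non-critical stretches via the Koebe distortion principle.

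Since $\{f^i(I_n): 0 \leq i < q_{n+1}\}$ is a subfamily of pairwise disjoint atoms of $\mathcal{P}_n$, the starred family $\{f^i(I_n)^*\}$ has multiplicity of intersection at most $3$. Consequently, each of the $N$ critical points of $f$ lies in at most $3$ extended atoms, so the number of critical times is at most $3N$. At each critical time $t$ the real bounds (Theorem \ref{realbounds}) guarantee, for $n$ large, that $f^t(I_n)$ lies inside the neighborhood $U$ of Lemma \ref{lemacalculo}; part (\ref{itemdist}) then yields
\[
Df(y) \;\leq\; 3d\,\frac{|f^{t+1}(I_n)|}{|f^t(I_n)|} \qquad \text{for every } y \in f^t(I_n).
\]

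On each maximal non-critical stretch $[a,b]$, I apply Lemma \ref{koebe} to $f^{b-a+1}$ over $M = f^a(I_n)$. The required extension $T \supset M$ is produced by pulling back, along the (diffeomorphic) orbit of $M$ under $f, f^2, \ldots, f^{b-a}$, a sub-interval of $f^b(I_n)^*$ chosen to contain $f^b(I_n)$ with a definite $\tau$-scaled neighborhood while avoiding any critical point in an adjacent atom---feasible because the real bounds render the critical-free portion of each such neighbor comparable to $f^b(I_n)$. The pull-back is well-defined because the stretch is non-critical, a uniform $\tau$-scaled neighborhood of $M$ in $T$ is preserved by the real bounds, and bounded multiplicity of $\{f^i(I_n)^*\}$ gives $\sum_{i=0}^{b-a}|f^i(T)| = O(1)$. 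Lemma \ref{koebe} then provides a distortion constant $K_0 = K_0(f)$, from which the mean value theorem gives $Df^{b-a+1}(y) \leq K_0\,|f^{b+1}(I_n)|/|f^a(I_n)|$ for every $y \in M$.

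Expanding $Df^j(x) = \prod_{i=0}^{j-1} Df(f^i(x))$ and grouping the factors by critical times and non-critical stretches, the intermediate length ratios telescope to $|f^j(I_n)|/|I_n|$ while the multiplicative constants compound into $K = (3d\,K_0)^{3N+1}$, which depends only on $f$. The main obstacle is precisely the careful construction of the extension $T$ on each non-critical stretch, which must simultaneously serve as a diffeomorphic domain for $f^{b-a+1}$, carry a uniform $\tau$-scaled neighborhood of $M$, and have bounded sum of iterate lengths; this is where the combinatorics of the dynamical partition $\mathcal{P}_n$ interacts most delicately with the real bounds, and it is the natural adaptation to the multicritical setting of the single-critical-point argument of de Faria--de Melo \cite{dFdM}.
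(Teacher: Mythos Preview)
Your overall architecture matches the paper's exactly: split $\{0,\ldots,j-1\}$ into at most $3N$ critical times and intervening diffeomorphic stretches, control the critical times via Lemma~\ref{lemacalculo}(\ref{itemdist}), the stretches via Koebe, and telescope. The gap is in how you furnish the Koebe extension $T$ on each stretch.

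You propose to \emph{pull back} a $\tau$-scaled sub-interval of $(f^b(I_n))^*$ along the stretch, asserting this is well-defined ``because the stretch is non-critical.'' But the family $\{(f^i(I_n))^*\}_i$ is \emph{not} an $f$-orbit: the operations ``apply $f$'' and ``take $\mathcal{P}_n$-neighbours'' do not commute (they disagree precisely at atoms adjacent to $I_n\cup I_{n+1}$, where the partition is re-cut under $f$). Hence $f^{-1}\big((f^{i+1}(I_n))^*\big)$ need not lie inside $(f^i(I_n))^*$, so your pulled-back interval can escape the region you have certified as critical-free; your multiplicity bound on $\sum |f^i(T)|$, which likewise presupposes $f^i(T)\subset (f^{a+i}(I_n))^*$, is equally unjustified. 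There is also an indexing mismatch: Koebe needs the $\tau$-space in the \emph{target} $f^{b-a+1}(T)$, i.e.\ at time $b+1$, while you only arrange it at time $b$; and your appeal to ``the critical-free portion of each such neighbor'' being comparable to $f^b(I_n)$ is not a consequence of the real bounds as stated (Theorem~\ref{realbounds} controls adjacent \emph{atoms}, not the position of a critical point inside an atom).

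The paper sidesteps all of this by going \emph{forward}: it fixes $T_n=I_n^*=I_{n+1}\cup I_n\cup I_n^{q_n}$ once and for all and uses the genuine orbit $\{f^j(T_n)\}$ as the extension; critical times are defined via $f^j(T_n)$, so diffeomorphy on stretches is automatic. The price is that the wings $f^j(I_{n+1})$ and $f^{j+q_n}(I_n)$ are no longer $\mathcal{P}_n$-atoms for general $j$, so the real bounds do \emph{not} directly give $\tau$-space. This is the content of Fact~\ref{FATO2}, whose proof uses the Cross-Ratio Inequality: one checks comparability at $j=0$ and $j=q_{n+1}$ from the real bounds, and propagates to intermediate $j$ by bounding $\crd(f^{q_{n+1}-j};I_n^j,T_n^j)$. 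Your proposal is implicitly an attempt to bypass this use of the Cross-Ratio Inequality, but the substitute (real bounds plus pullback) does not close.
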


For future reference, we note the following consequence of the real bounds.

\begin{corollary}\label{coroC1bounds} The sequence $\big\{f^{q_{n+1}}|I_{n}\big\}$ is bounded in the $C^1$ metric.
\end{corollary}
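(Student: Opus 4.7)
The corollary will be an essentially immediate consequence of Lemma \ref{keyLemma} combined with the real bounds of Theorem \ref{realbounds}. Since each $f^{q_{n+1}}|_{I_n}$ takes values in $S^1$, the $C^0$-norm is trivially bounded by $1$, so only a uniform bound on $|Df^{q_{n+1}}|$ needs to be produced.

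First I would invoke Lemma \ref{keyLemma}, which gives, for $n>n_0(f)$ and $x\in I_n$,
\[
|Df^{q_{n+1}}(x)| \;\le\; K\,\frac{|f^{q_{n+1}}(I_n)|}{|I_n|}\,.
\]
Hence it suffices to show that the ratio $|I_n^{q_{n+1}}|/|I_n|$ is bounded by a constant depending only on $f$. The key combinatorial observation is that $I_n^{q_{n+1}}\subset I_n\cup I_{n+1}$: one endpoint of $I_n^{q_{n+1}}$ is the closest return $f^{q_{n+1}}(c_0)$, which coincides with the endpoint of $I_{n+1}$ opposite to $c_0$, while the other endpoint $f^{q_{n+1}+q_n}(c_0)$ sits between $c_0$ and $f^{q_n}(c_0)$. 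This inclusion can be read off from the rigid rotation model via Yoccoz's theorem (Theorem \ref{yoccoztheorem}), so it is purely combinatorial and independent of the specific multicritical circle map.

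Since $I_n$ and $I_{n+1}$ are adjacent atoms of the dynamical partition $\mathcal{P}_n(c_0)$, Theorem \ref{realbounds} provides a constant $C=C(f)>0$ with $|I_{n+1}|\le C|I_n|$. Therefore $|I_n^{q_{n+1}}|\le |I_n|+|I_{n+1}|\le (1+C)|I_n|$, which combined with the previous display yields $|Df^{q_{n+1}}(x)|\le K(1+C)$ on $I_n$ for all $n\ge n_0$. Together with the trivial $C^0$ bound this gives the desired $C^1$-boundedness of the sequence. The main point requiring care is the combinatorial containment $I_n^{q_{n+1}}\subset I_n\cup I_{n+1}$; everything else is a direct assembly of the two results already in place.
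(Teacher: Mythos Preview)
Your argument is correct and essentially identical to the paper's: both use the combinatorial containment $f^{q_{n+1}}(I_n)\subset I_n\cup I_{n+1}$, the real bounds $|I_{n+1}|\asymp |I_n|$, and Lemma \ref{keyLemma} to bound the derivative. The only cosmetic difference is that the paper also records the lower inclusion $I_{n+1}\subset f^{q_{n+1}}(I_n)$, which is not needed for the corollary.
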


\begin{proof}[Proof of Corollary \ref{coroC1bounds}] By combinatorics, $I_{n+1} \subset f^{q_{n+1}}(I_n) \subset I_n \cup I_{n+1}$. 
Then:$$\frac{|I_{n+1}|}{|I_{n}|}\leq\frac{\big|f^{q_{n+1}}(I_n)\big|}{|I_{n}|}\leq 1+\frac{|I_{n+1}|}{|I_{n}|}\,.$$
By the real bounds (Theorem \ref{realbounds}) we have $|I_{n+1}|\asymp|I_n|$, and then $\big|f^{q_{n+1}}(I_{n})\big|\asymp|I_{n}|$. Therefore 
Corollary \ref{coroC1bounds} follows from Lemma \ref{keyLemma}.
\end{proof}

The remainder of this section is devoted to proving Lemma \ref{keyLemma}.

\begin{proof}[Proof of Lemma \ref{keyLemma}] For each $n \in \nt$ consider $L_{n}=I_{n+1}$, $R_{n}= f^{q_{n}}(I_n)$ and $T_{n}=I_{n}^{*}= L_{n} \cup I_{n} \cup R_{n}$. We have three preliminary facts:

\begin{fact}\label{FATO1} The family $\{ T_{n}, f(T_{n}), \cdots, f^{q_{n+1}-1}(T_{n}) \}$ has intersection multiplicity bounded by $3$.
\end{fact}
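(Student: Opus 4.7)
The plan is to decompose each $T_n$ as a union of three adjacent atoms of $\mathcal{P}_n$ and to show that the $q_{n+1}$ iterates of each of those atoms form, separately, a pairwise disjoint family; the multiplicity bound of $3$ then follows at once, since each point of $S^1$ can meet at most one member of each of the three subfamilies.

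More concretely, I would write $T_n = L_n \cup I_n \cup R_n$ with $L_n = I_{n+1}$ and $R_n = f^{q_n}(I_n)$ (these being the two $\mathcal{P}_n$-neighbors of $I_n$), and observe that for every $0 \le i \le q_{n+1}-1$,
$$f^i(T_n) \;=\; f^i(L_n) \cup f^i(I_n) \cup f^i(R_n).$$
Hence the intersection multiplicity of $\{f^i(T_n)\}_{i=0}^{q_{n+1}-1}$ at any $x\in S^1$ is bounded by the sum of the intersection multiplicities at $x$ of the three subfamilies
$$\mathcal{F}_L = \{f^i(I_{n+1})\}_{i=0}^{q_{n+1}-1},\quad \mathcal{F}_I = \{f^i(I_n)\}_{i=0}^{q_{n+1}-1},\quad \mathcal{F}_R = \{f^{i+q_n}(I_n)\}_{i=0}^{q_{n+1}-1}.$$

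The second step is to check that each of these three families is pairwise disjoint. The family $\mathcal{F}_I$ consists of atoms of $\mathcal{P}_n$, by Lemma \ref{lemapartition}. Since $q_{n+1}-1 \le q_{n+2}-1$, the family $\mathcal{F}_L$ is a subfamily of the atoms $\{f^j(I_{n+1})\}_{j=0}^{q_{n+2}-1}$ of $\mathcal{P}_{n+1}$, hence also pairwise disjoint. Finally, $\mathcal{F}_R = f^{q_n}(\mathcal{F}_I)$, and since $f^{q_n}$ is a homeomorphism of $S^1$, it carries the pairwise disjoint family $\mathcal{F}_I$ to a pairwise disjoint family. Combining these three observations, each $x\in S^1$ lies in at most one interval from each of $\mathcal{F}_L, \mathcal{F}_I, \mathcal{F}_R$, and therefore in at most three iterates $f^i(T_n)$, which is precisely the bound claimed.

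The argument is purely combinatorial and presents no real obstacle; it uses neither the real a-priori bounds nor any Schwarzian estimates. The only bookkeeping that needs care is the identification of $L_n$ and $R_n$ as the two $\mathcal{P}_n$-neighbors of $I_n$, together with the verification that the index range $0 \le i \le q_{n+1}-1$ sits inside the atom-indexing ranges of both $\mathcal{P}_n$ and $\mathcal{P}_{n+1}$, which is automatic from the strict monotonicity $q_n < q_{n+1} < q_{n+2}$ of the return times.
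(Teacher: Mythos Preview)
Your argument is correct and is in fact the natural unpacking of the paper's one-line justification. The paper invokes the topological conjugacy to the rigid rotation $R_\rho$ and asserts the analogous combinatorial fact for an interval of the form $[z,R_\rho^{3q_n}(z)]$, without further detail; what makes that fact true for $R_\rho$ is precisely that such an interval is a union of three consecutive atoms of the corresponding dynamical partition, each of whose first $q_{n+1}$ iterates is pairwise disjoint. You carry out exactly this decomposition directly for $f$, using Lemma~\ref{lemapartition} (for $\mathcal{F}_I$ and $\mathcal{F}_L$) and the fact that $f^{q_n}$ is a homeomorphism (for $\mathcal{F}_R$), so you never need to pass through the conjugacy at all. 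The two arguments are therefore the same in substance; yours is simply more explicit, and has the minor advantage of avoiding any appeal to Theorem~\ref{yoccoztheorem}. The only caveat is the usual endpoint ambiguity (atoms of $\mathcal{P}_n$ share boundary points), but this is irrelevant for all the applications of Fact~\ref{FATO1} in the paper.
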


Fact \ref{FATO1} follows from the following general fact: given $z\in S^1$ and $n\in\nt$ let $I=\big[z,R_{\rho}^{3q_n}(z)\big]$, where $R_{\rho}$
is the rigid rotation of angle $2\pi\rho$ in the unit circle. Then the multiplicity of intersection of the family 
$\big\{I,R_{\rho}(I),...,R_{\rho}^{q_{n+1}-1}(I)\big\}$ is $3$ for any $n\in\nt$.

\begin{fact}\label{FATO2} There exists a constant $\tau>0$ (depending only on the real bounds of $f$) such
 that$$|L_n^j|>\tau|I_n^j|\quad\mbox{and}\quad|R_n^j|>\tau|I_n^j|$$for each $j \in \{ 0, \cdots, q_{n+1} \}$ and
 for all $n\in\nt$.
 \end{fact}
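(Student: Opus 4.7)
The plan is to prove Fact \ref{FATO2} by combining the real bounds (Theorem \ref{realbounds}) with the Cross-Ratio Inequality applied along the orbit of $T_n$, making essential use of the bounded multiplicity guaranteed by Fact \ref{FATO1}.

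\textbf{Base case.} At $j=0$, the intervals $L_n=I_{n+1}^{0}$, $I_n=I_n^{0}$ and $R_n=I_n^{q_n}$ are three consecutive atoms of $\mathcal{P}_n$, meeting at the critical point $c_0$ and at $f^{q_n}(c_0)$. Two applications of Theorem \ref{realbounds} give $|L_n|\asymp|I_n|\asymp|R_n|$ with constant depending only on the real-bounds constant of $f$, and in particular $[I_n,T_n]\ge c_0>0$ for some $c_0=c_0(f)$.

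\textbf{General $j$.} For $1\le j\le q_{n+1}$, I would first establish a-priori \emph{upper} bounds $|L_n^j|,|R_n^j|\le C'\,|I_n^j|$ by a combinatorial case analysis on the position of $L_n^j=I_{n+1}^{j}$ and $R_n^j=I_n^{j+q_n}$ inside the partitions $\mathcal{P}_n$ and $\mathcal{P}_{n+1}$: in every case the target iterate is contained in an atom of $\mathcal{P}_n$ adjacent to $I_n^j$ (either as that atom itself, or as a $\mathcal{P}_{n+1}$-sub-atom of it), and Theorem \ref{realbounds} delivers the bound. Next, I would use Fact \ref{FATO1} together with the Cross-Ratio Inequality applied to the pair $(I_n,T_n)$ along the orbit $\{T_n^i\}$ in \emph{both} time directions to obtain two-sided control
\[
\frac{c_0}{C^3}\;\le\;[I_n^j,T_n^j]\;\le\;C^{3}\,[I_n,T_n]\,.
\]
Setting $\ell=|L_n^j|$, $m=|I_n^j|$, $r=|R_n^j|$, the lower cross-ratio bound reads $\ell r\ge (c_0/C^3)(\ell+m)(m+r)$, and combined with the a-priori upper bounds $\ell,r\le C'm$ it forces $\ell,r\ge\tau m$ for some $\tau=\tau(f)>0$, which is precisely Fact \ref{FATO2}.

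\textbf{Main obstacle.} The delicate point is the \emph{lower} bound on $[I_n^j,T_n^j]$. The Cross-Ratio Inequality as stated provides only an upper bound on the forward evolution of cross-ratios, so the matching lower bound has to be obtained by running the inequality backward in time, i.e.\ via an analogue for the inverse homeomorphism $f^{-1}$. Although $f^{-1}$ is not itself a multicritical circle map in the sense of Definition \ref{def:multicritic} (its non-smooth points are of \emph{inverse} power-law type at the forward images of the critical points of $f$), the arguments of \cite{S,yoccoz} adapt to produce a Cross-Ratio Inequality for $f^{-1}$ with constant depending only on the real bounds of $f$. A secondary technical point is the ``returning'' sub-case in the combinatorial upper bound for $R_n^j$, where $j+q_n\ge q_{n+1}$ and $R_n^j$ straddles the critical point $c_0$; there one invokes the first-return-map structure of $f^{q_{n+1}}|_{I_n}$ together with the real bounds to conclude that $R_n^j$ is still contained in a bounded union of atoms of $\mathcal{P}_n$ adjacent to $I_n^j$.
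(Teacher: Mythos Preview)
Your approach has a genuine gap precisely at the step you flag as the main obstacle: obtaining a \emph{lower} bound on $[I_n^j,T_n^j]$ via a Cross-Ratio Inequality for $f^{-1}$. Such an inequality does not hold in the form you need. Near a critical value, $f^{-1}$ behaves like $y\mapsto y^{1/d}$, which has \emph{positive} Schwarzian derivative and can expand cross-ratios without bound. For instance, with $T=[0,1]$, $L=[0,\epsilon]$, $M=[\epsilon,1-\epsilon]$, $R=[1-\epsilon,1]$ one computes $\crd(y\mapsto y^{1/d};M,T)\asymp\epsilon^{-(1-1/d)}\to\infty$ as $\epsilon\to 0$. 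Thus Item~\eqref{itemcross} of Lemma~\ref{lemacalculo} has no analogue for $f^{-1}$, and the arguments of \cite{S,yoccoz} do not adapt as you claim. Since the bad configuration (one lateral interval much shorter than the middle one) is exactly the one you are trying to rule out, there is also no bootstrap available.

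The paper avoids this difficulty by a simple change of direction. Instead of comparing time $j$ with time $0$, it first uses the real bounds alone to show that $L_n^{q_{n+1}}$, $I_n^{q_{n+1}}$, $R_n^{q_{n+1}}$ are pairwise comparable, and then applies the ordinary Cross-Ratio Inequality \emph{forward} to $f^{q_{n+1}-j}$ on the pair $(I_n^j,T_n^j)$. This yields
\[
[I_n^j,T_n^j]\;\geq\;\frac{1}{K_0}\,[I_n^{q_{n+1}},T_n^{q_{n+1}}]\;\geq\;c\,,
\]
and the inequality $\big(1+|I_n^j|/|L_n^j|\big)\big(1+|I_n^j|/|R_n^j|\big)\leq K$ follows immediately. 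No Cross-Ratio Inequality for $f^{-1}$, no separate upper bounds on $|L_n^j|,|R_n^j|$, and no ``returning'' case analysis are needed.
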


\begin{proof}[Proof of Fact \ref{FATO2}] For $j=0$, observe that the intervals $L_n$, $I_n$ and $R_n$ are adjacent and belong to the dynamical 
partition $\mathcal{P}_n$, then by the real bounds they are comparable by a constant that only depends on $f$. Let us prove now that for 
$j= q_{n+1}$ the three intervals $L_n^{j}$, $I_n^j$ and $R_n^j$ are comparable too.

On one hand, the intervals $I_{n+1}$ and $I_{n+1}^{q_{n+1}}$ are adjacent and belong to $\mathcal{P}_{n+1}$, then they are comparable 
(again by the real bounds).
Moreover $I_{n+1}\subset I_{n}^{q_{n+1}} \subset I_{n+1}\cup I_n$. By the real bounds $|I_n|\asymp|I_{n+1}|$ and then
$|I_{n}^{q_{n+1}}|\asymp|I_{n+1}^{q_{n+1}}|$, that is:
\begin{equation} \label{fact2}
|L_{n}^{q_{n+1}}| \asymp |I_{n}^{q_{n+1}}|\,.
\end{equation}

On the other hand, the intervals $I_{n}$ and $I_{n}^{q_{n}}$ are adjacent and belong to $\mathcal{P}_{n}$, then they are comparable.
Moreover:$$I_{n+1}^{q_{n}}\subset I_{n}^{q_n+q_{n+1}}\subset I_n \cup I_{n}^{q_{n}}\,.$$

From \cite[item (v), p. 14]{EdF} we know that $|I_{n+1}^{q_{n}}| \asymp |I_{n}|$ and then $|I_{n}^{q_{n}+q_{n+1}}| \asymp |I_{n}|$. But
$I_{n+1}\subset I_n^{q_{n+1}}\subset I_n\cup I_{n+1}$ and then by the real bounds:
\begin{equation} \label{fact22}
 |R_{n}^{q_{n+1}}|=|I_{n}^{q_{n}+q_{n+1}}| \asymp |I_{n}| \asymp |I_n^{q_{n+1}}|\,.
\end{equation}

Therefore, for $j= q_{n+1}$, the three intervals $L_n^{j}$, $I_n^j$ and $R_n^j$ are comparable. Now, let $1 \leq j \leq q_{n+1}-1$. Consider the intervals $|L_{n}^j|, \ |I_{n}^j| , \ |R_{n}^j|$ and their images by the map $f^{q_{n+1}-j}$. By the Cross-Ratio Inequality (combined with Fact \ref{FATO1}) we have that there exists a constant $K_0=K_0(f)>1$ such that
\begin{equation*}
  \dfrac{|L_{n}^{q_{n+1}}| |R_{n}^{q_{n+1}}| |L_{n}^{j} \cup I_{n}^{j}| |I_{n}^{j} \cup R_{n}^{j}|}{|L_{n}^{j}| |R_{n}^{j}| 
  |L_{n}^{q_{n+1}} \cup I_{n}^{q_{n+1}}| |I_{n}^{q_{n+1}} \cup R_{n}^{q_{n+1}}|} \leq K_0\,.
\end{equation*}

Using \eqref{fact2} and \eqref{fact22} in the last inequality, we get
\begin{equation*}
 \left( 1 + \dfrac{|I_{n}^j|}{|L_{n}^{j}|} \right) \left( 1 + \dfrac{|I_{n}^j|}{|R_{n}^j|} \right) \leq K\,,
\end{equation*}
and we are done.
\end{proof}

\begin{remark} We can always assume, whenever necessary, that $n_{0}=n_{0}(f)$ given by Lemma \ref{keyLemma} is such that for all $n\geq n_{0}$ and $j\in\{0,...,q_{n+1}\}$ we have $\car( f^{j}(T_{n}) \cap\crit(f)) \leq 1$, where $\car$ denotes the cardinality of a finite set, and $\crit(f)$ is the set of critical points of $f$ (this is because, by minimality, $\big|f^j(T_n)\big|$ goes to zero as $n$ goes to infinity).
\end{remark}

\begin{definition}[Critical times] We say that $j \in \{1, \cdots, q_{n+1} \}$ is a \emph{critical time} if 
$f^{j}(T_{n}) \cap\crit(f) \neq \O$.
\end{definition}

\begin{remark} Note that $\car( \{\mbox{critical times} \} )\leq 3N$.
\end{remark}

\begin{fact}\label{regbranches} Let $1 \leq j_1<j_2\leq q_{n+1}$ be two consecutive critical times. Then for all $x \in f^{j_1+1}(I_n)$ we have:
 $$Df^{j_2-j_1-1}(x) \asymp \frac{|f^{j_2}(I_n)|}{|f^{j_1+1}(I_n)|}\,,$$with universal constants (depending only on the real bounds).
\end{fact}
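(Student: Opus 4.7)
The plan is to deduce this from the Koebe distortion principle (Lemma 2.4) applied to the iterate $f^{j_2-j_1-1}$ restricted to the interval $f^{j_1+1}(T_n) = L_n^{j_1+1}\cup I_n^{j_1+1}\cup R_n^{j_1+1}$. Once we show that $f^{j_2-j_1-1}$ has uniformly bounded distortion on $f^{j_1+1}(I_n)$, the desired comparison $Df^{j_2-j_1-1}(x)\asymp |f^{j_2}(I_n)|/|f^{j_1+1}(I_n)|$ follows immediately from the Mean Value Theorem.

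To apply Koebe, three ingredients need to be verified. First, I would check that $f^{j_2-j_1-1}$ is a diffeomorphism on $f^{j_1+1}(T_n)$: since $j_1$ and $j_2$ are \emph{consecutive} critical times, the intermediate iterates $f^{j}(T_n)$ with $j_1 < j < j_2$ contain no critical point of $f$, so no critical point is encountered along the orbit segment under consideration. Second, the image $f^{j_2}(T_n)$ contains a $\tau$-scaled neighborhood of $f^{j_2}(I_n)$ for a constant $\tau>0$ depending only on the real bounds: this is exactly the content of Fact \ref{FATO2}, applied at time $j_2$, which gives $|L_n^{j_2}|>\tau|I_n^{j_2}|$ and $|R_n^{j_2}|>\tau|I_n^{j_2}|$. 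Third, the sum $\sum_{j=j_1+1}^{j_2-1}|f^j(T_n)|$ must be uniformly bounded; this follows from Fact \ref{FATO1}, since the family $\{T_n, f(T_n),\ldots, f^{q_{n+1}-1}(T_n)\}$ has intersection multiplicity at most $3$, so the total sum of lengths of any subcollection is at most $3$.

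With these three items in hand, Koebe gives a constant $K>1$ (depending only on $\tau$ and on the $C_0$ appearing in Lemma \ref{koebe}, hence only on $f$ and on the real bounds) such that $Df^{j_2-j_1-1}$ varies by a factor of at most $K$ on $f^{j_1+1}(I_n)$. Combined with the identity $\int_{f^{j_1+1}(I_n)} Df^{j_2-j_1-1} = |f^{j_2}(I_n)|$ and the Mean Value Theorem, this yields the claimed two-sided comparison.

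The only subtle point I expect is the verification that the Koebe hypotheses apply at the intermediate stage: one must make sure the orbit of the \emph{extended} interval $T_n$ (not just of $I_n$) still enjoys bounded intersection multiplicity, which is why Fact \ref{FATO1} is stated for $T_n$ rather than $I_n$. Beyond this, the argument is a direct application of the standing tools, and no further combinatorial work is required.
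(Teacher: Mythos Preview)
Your proposal is correct and follows essentially the same route as the paper's proof: verify that $f^{j_2-j_1-1}$ is a diffeomorphism on $f^{j_1+1}(T_n)$ because no critical point appears between consecutive critical times, invoke Fact~\ref{FATO1} for the length bound and Fact~\ref{FATO2} for the $\tau$-space at level $j_2$, apply Koebe, and finish with the Mean Value Theorem. The paper's argument is identical in structure and in the tools used.
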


\begin{proof}[Proof of Fact \ref{regbranches}] 
Note that $f^{j_2-j_1-1}:f^{j_1+1}(T_n) \to f^{j_2}(T_n)$ is a diffeomorphism. Fact \ref{FATO1} implies that  
$\sum_{i=0}^{j_2-j_1-1}|f^{i}(f^{j_1+1}(T_n))| < 3$, and by Fact \ref{FATO2} the interval $f^{j_2-j_1-1}(f^{j_1+1}(T_n))$ contains a 
$\tau-$scaled neighborhood of $f^{j_2-j_1-1}(f^{j_1+1}(I_n))$. By Koebe Distortion Principle (Lemma \ref{koebe}) there exists a constant
$K_0=K_0(f)>1$ such that for all $x,y \in f^{j_1+1}(I_n)$ we have that
\begin{equation*}
 \dfrac{1}{K_0} \leq \dfrac{Df^{j_2-j_1-1}(x)}{Df^{j_2-j_1-1}(y)} \leq K_0\,.
\end{equation*}
Let $y\in I_n^{j_1+1}$ be given by the Mean Value Theorem such that 
\[
Df^{j_2-j_1-1}(y)=\dfrac{|f^{j_2}(I_n)|}{|f^{j_1+1}(I_n)|}\ .
\] 
Then for all $x \in f^{j_1+1}(I_n)$,
\begin{equation*}
 \dfrac{1}{K_0} \dfrac{|f^{j_2}(I_n)|}{|f^{j_1+1}(I_n)|} \leq Df^{j_2-j_1-1}(x) \leq K_0 \dfrac{|f^{j_2}(I_{n})|}{|f^{j_1+1}(I_n)|}\,. 
\end{equation*}
\end{proof}

We finish the proof of Lemma \ref{keyLemma} by combining Fact \ref{regbranches} and Item \eqref{itemdist} in Lemma \ref{lemacalculo} with the help of the 
chain rule: $$Df^{j}(x) \leq (3d)^{3N}K_0^{3N}\,\dfrac{|f^{j}(I_{n})|}{|I_{n}|}\quad\mbox{for any $x \in I_{n}$ and 
$j \in \{1, \cdots, q_{n+1} \}$\,,}$$where $N=\car\big(\crit(f)\big)$ is the number of critical points of $f$, $d$ is the maximum of its criticalities and $K_0=K_0(f)$ is given by Fact \ref{regbranches}.
\end{proof}

\section{The negative Schwarzian property}\label{secnegscwarz}

In this section we prove the following result.

\begin{lemma}\label{negschwarz} Let $f$ be a multicritical circle map. There exists $n_1=n_1(f)\in\nt$ such that for all $n \geq n_1$ we have
that
\[ Sf^{j}(x)<0\quad\text{for all $j\in \{1, \cdots, q_{n+1}\}$ and for all $x \in I_{n}$ regular point of $f^{j}$.}
 \]
Likewise, we have 
\[ Sf^{j}(x)<0\quad\text{for all $j\in \{1, \cdots, q_{n}\}$ and for all $x \in I_{n+1}$ regular point of $f^j$}.
 \]
\end{lemma}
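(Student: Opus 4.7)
The plan is to use the chain rule for the Schwarzian derivative,
\begin{equation*}
Sf^{j}(x)\;=\;\sum_{i=0}^{j-1}Sf\bigl(f^{i}(x)\bigr)\bigl(Df^{i}(x)\bigr)^{2},
\end{equation*}
and to show that the $i=0$ term, which is strongly negative because $x$ lies close to the critical point $c_{0}$, dominates all other terms for $n$ large. Let $U=\bigcup_{\ell}U_{\ell}$ be the union of the critical neighbourhoods supplied by item~\ref{itemSf} of Lemma~\ref{lemacalculo}, where $Sf(y)<-K/(y-c_{\ell})^{2}<0$ for every $y\in U_{\ell}\setminus\{c_{\ell}\}$. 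On the compact set $S^{1}\setminus U$ the map $f$ has no critical points, so by $C^{3}$-smoothness there is $M=M(f)$ with $|Sf|\leq M$ outside $U$.

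The key tactical choice is to split the Schwarzian sum according to whether $f^{i}(x)\in U$ or not (by position of the orbit point), rather than by critical times (which are defined via $f^{i}(T_{n})$). Every term with $f^{i}(x)\in U$ is non-positive, so can be discarded when upper-bounding, except at $i=0$, where we keep a definite gain: for $n$ large enough that $I_{n}\subset U_{0}$, and since $c_{0}$ is an endpoint of $I_{n}$, we have $|x-c_{0}|\leq|I_{n}|$, hence
\begin{equation*}
Sf(x)\;<\;-\frac{K}{(x-c_{0})^{2}}\;\leq\;-\frac{K}{|I_{n}|^{2}}.
\end{equation*}
For the remaining terms with $f^{i}(x)\notin U$, the $C^{1}$-bound $\bigl(Df^{i}(x)\bigr)^{2}\leq K_{1}^{2}|f^{i}(I_{n})|^{2}/|I_{n}|^{2}$ (Lemma~\ref{keyLemma}) combined with the multiplicity-$3$ estimate $\sum_{i}|f^{i}(I_{n})|\leq\sum_{i}|f^{i}(T_{n})|\leq 3$ from Fact~\ref{FATO1} yields
\begin{equation*}
\sum_{i:\,f^{i}(x)\notin U}\bigl|Sf(f^{i}(x))\bigr|\bigl(Df^{i}(x)\bigr)^{2}\;\leq\;\frac{MK_{1}^{2}}{|I_{n}|^{2}}\sum_{i=0}^{j-1}|f^{i}(I_{n})|^{2}\;\leq\;\frac{3MK_{1}^{2}\delta_{n}}{|I_{n}|^{2}},
\end{equation*}
where $\delta_{n}=\max_{0\leq i\leq q_{n+1}-1}|f^{i}(I_{n})|$.

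To conclude, the real bounds (Theorem~\ref{realbounds}) provide a universal $\mu<1$ such that $\mathcal{P}_{n+2}$ strictly refines $\mathcal{P}_{n}$ with contraction factor $\mu$, so the maximum atom length of $\mathcal{P}_{n}$ decays geometrically and $\delta_{n}\to 0$. Choosing $n_{1}$ so that $3MK_{1}^{2}\delta_{n}<K$ for all $n\geq n_{1}$, we obtain
\begin{equation*}
Sf^{j}(x)\;\leq\;\frac{-K+3MK_{1}^{2}\delta_{n}}{|I_{n}|^{2}}\;<\;0,
\end{equation*}
proving the first assertion. The statement for $I_{n+1}$ and $j\in\{1,\ldots,q_{n}\}$ follows by the same argument applied at level $n+1$: Fact~\ref{FATO1} then gives intersection multiplicity at most $3$ for the first $q_{n+2}$ iterates of $T_{n+1}$, a fortiori for its first $q_{n}$. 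The main subtlety is the choice to split by position rather than by critical times, since only this split makes every ``inside-$U$'' term automatically non-positive and lets the $i=0$ contribution dominate cleanly; a split by critical times would leave non-critical iterates whose orbit point still lies near (but outside a critical atom of) some $c_{\ell}$, for which controlling $|Sf|$ would be considerably more delicate.
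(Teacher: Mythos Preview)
Your proof is correct and follows essentially the same approach as the paper: both use the Schwarzian chain rule, isolate the dominant negative contribution at $i=0$ via item~\eqref{itemSf} of Lemma~\ref{lemacalculo}, and control the remaining terms using the $C^1$-bound of Lemma~\ref{keyLemma} together with $\delta_n\to 0$. The only cosmetic difference is that the paper splits the sum according to whether the whole interval $I_n^k$ lies in the critical neighbourhood $\mathcal{U}$ (after choosing $n_1$ so large that $\delta_n$ is below the Lebesgue number of a cover $\{\mathcal{U},\mathcal{V}\}$), whereas you split by the position of the single point $f^i(x)$; both splits make the ``inside'' terms non-positive and leave the ``outside'' terms with $|Sf|\leq M$, so the arguments are interchangeable.
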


In the proof we adapt the exposition in \cite[pages 380-381]{dFdM}.

\begin{proof}[Proof of Lemma \ref{negschwarz}] We give the proof only for the case $x\in I_n$ regular point of $f^j$ for some 
$j \in \{ 1, \cdots, q_{n+1}\}$ (the other case is entirely analogous).

By Item \eqref{itemSf} in Lemma \ref{lemacalculo} we know that for each critical point $c_{i}$ there exist a neighborhood 
$U_{i} \subseteq S^{1}$ of $c_i$ and a positive
constant $K_i$ such that for all $x \in U_{i} \setminus \{c_{i} \}$ we have
\begin{equation}\label{negS0}
 Sf(x)< -\dfrac{K_{i}}{(x-c_{i})^{2}} <0\,.
\end{equation}

Let us call $\mathcal{U}=\bigcup_{i=0}^{i=N-1} U_i$, and let $\mathcal{V}\subset S^1$ be an open set that contains none of the critical 
points of $f$ and such that $\mathcal{U}\cup \mathcal{V}=S^1$. Since $f$ is $C^3$, $M= \sup_{y \in \mathcal{V}}\big|Sf(y)\big|$ is finite. 
Let $\delta_n=\max_{0\leq j< q_{n+1}} |I_n^j|$. We know that $\delta_n\to 0$ as
$n\to \infty$, because $f$ is topologically conjugate to a rotation. We choose $n_1=n_1(f)$ so large that $\delta_n$ is smaller than the 
Lebesgue number of the covering $\{\mathcal{U}, \mathcal{V}\}$ of the circle for all $n\geq n_1$. Using the \emph{chain rule} for the Schwarzian 
derivative, we have for all $n\geq n_1$ and all $x\in I_{n}$ regular point of $f^{j}$
 \begin{equation}
 Sf^{j}(x)\;=\; \sum_{k=0}^{j-1} Sf(f^k(x))\left[Df^k(x)\right]^2 \ .
 \end{equation}
We can decompose this sum as $\Sigma_1^{(n)}(x) + \Sigma_2^{(n)}(x)$ where
 \begin{equation}\label{S1}
 \Sigma_1^{(n)}(x)\;=\; \sum_{k : I_n^k \subset \mathcal{U}} Sf(f^k (x))\left[Df^k (x)\right]^2 \ ,
\end{equation}
and $\Sigma_2^{(n)}(x)$ is the sum over the remaining terms.\\

Now we proceed through the following steps:
\begin{enumerate}
 \item[(i)] Since $I_n \subset \mathcal{U}$, the sum in the right-hand side of \eqref{S1} includes the term 
 with $k=0$, namely $Sf(x)$. Since all the other terms in \eqref{S1} are negative as well, and since $|x-c_0| \leq |I_n|$, we deduce from
 \eqref{negS0} that:
 \begin{equation}\label{S1final}
 \Sigma_1^{(n)}(x)\;<\; -\frac{K_1}{|I_n|^2} \,.
 \end{equation}
  \item[(ii)] Observe that,
  \begin{equation}
 \left|\Sigma_2^{(n)}(x)\right|\;\leq\; \sum_{I_n^k\subset \mathcal{V}} |Sf(f^k(x))|\left[Df^k(x)\right]^2.
\end{equation}
Assuming $n_1>n_0$, where $n_0=n_0(f)\in\nt$ is given by Lemma \ref{keylemma}, we know that there exists $K=K(f)>1$ such that
\begin{equation}\label{S2final}
\begin{aligned}
 \left|\Sigma_2^{(n)}(x)\right| &\leq \sum_{I_n^k\subset \mathcal{V}} |Sf(f^k(x))| K^{2} \dfrac{|I_{n}^k|^2}{|I_n|^2} \\
 &\leq M \dfrac{K^2}{|I_{n}|^2} \sum_{I_n^k\subset \mathcal{V}} |I_{n}^k|^2 \\
 &\leq  M \dfrac{K^2}{|I_{n}|^2} \max_{0 \leq k \leq j-1}|I_n^k| \sum_{I_n^k\subset \mathcal{V}} |I_{n}^k| \\
 &\leq  M \dfrac{K^2}{|I_{n}|^2}  \delta_{n}.
 \end{aligned}
\end{equation}
\end{enumerate}
 
Choosing $n_1$ so large that $K^2 M \delta_n < K_1$ for all $n\geq n_1$, we deduce from 
\eqref{S1final} and \eqref{S2final} that, indeed, $Sf^{j}(x)<0$ for all $j \in \{1, \cdots, q_{n+1} \}$ and for $n\geq n_1$.
\end{proof}

\section{Proof of main results}\label{sec:beaubounds}

In this final section we prove Theorem \ref{teobeau}, Theorem \ref{CRIuniversal}, Theorem \ref{scalingratios} and Coro\-llary~\ref{coro}.

For each critical point $c_i$ we consider its neighborhood $U_i$ given by Lemma \ref{lemacalculo}. Moreover, let $n_{1} \in \nt$ be given by Lemma \ref{negschwarz}. The following \emph{decomposition} will be crucial in the proof of Theorem \ref{CRIuniversal} given below (recall that, for a given $J\in\mathcal{P}_n$, we denote by $J^*$ the union of $J$ with its left and right neighbours in $\mathcal{P}_n$).

\begin{lemma}\label{lemadecomp} Given $\varepsilon>0$ there exists $n_2\in\nt$, $n_2=n_2(\varepsilon,f)>n_1$, with 
the following property: given $n \geq n_2$, $\Delta\in\mathcal{P}_n$ and $k\in\nt$ such that $f^j(\Delta)$ is contained in an element of
$\mathcal{P}_n$ for all $1\leq j \leq k$, we can write$$f^k|\Delta^*=\phi_{k}\circ\phi_{k-1}\circ...\circ\phi_1\,,$$where:
\begin{enumerate}
 \item For at most $3N+1$ values of $i\in\{1,...,k\}$, $\phi_i$ is a diffeomorphism with distortion bounded by $1+\varepsilon$.
 \item\label{casocrit} For at most $3N$ values of $i\in\{1,...,k\}$, $\phi_i$ is the restriction of $f$ to some interval contained in $U_i$. 
 \item For the remainder values of $i$, $\phi_i$ is either the identity or a diffeomorphism with negative Schwarzian derivative.
\end{enumerate}
\end{lemma}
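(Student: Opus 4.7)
The plan is to decompose $f^k|_{\Delta^*}$ by splitting at the indices where the orbit meets the critical set. First I would define the \emph{critical times} as the indices $0 \leq j_1 < j_2 < \cdots < j_s \leq k-1$ satisfying $f^{j_i}(\Delta^*) \cap \crit(f) \neq \emptyset$, and establish the bound $s \leq 3N$. This follows because $\Delta^*$ is the union of three consecutive atoms of $\mathcal{P}_n$ and, under our hypothesis, the forward $f$-iterates of each of these three atoms stay inside pairwise disjoint atoms of $\mathcal{P}_n$; hence the family $\{f^j(\Delta^*) : 0 \leq j \leq k-1\}$ has multiplicity of intersection at most $3$, so each of the $N$ critical points can lie in at most $3$ such iterates.

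Next I would set up the skeleton of the factorization by writing
\[
 f^k|_{\Delta^*} \;=\; F_s \circ \bigl(f|_{f^{j_s}(\Delta^*)}\bigr) \circ F_{s-1} \circ \cdots \circ \bigl(f|_{f^{j_1}(\Delta^*)}\bigr) \circ F_0,
\]
where $F_i = f^{j_{i+1}-j_i-1}|_{f^{j_i+1}(\Delta^*)}$ is the gap composition (with the convention $j_0=-1$ and $j_{s+1}=k$). Taking $n_2$ large enough that each $f^{j_i}(\Delta^*)$ sits inside the neighborhood $U_{\ell(i)}$ given by Lemma \ref{lemacalculo} (possible since $|f^{j_i}(\Delta^*)| \to 0$ and, after a further enlargement of $n_2$, contains at most one critical point), the $s \leq 3N$ critical-step factors directly realize condition~(2). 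Between consecutive critical times no critical point appears in the relevant intermediate iterates, so each gap $F_i$ is a genuine diffeomorphism, and there are at most $s+1 \leq 3N+1$ of them, which are the candidates for condition~(1).

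To upgrade each $F_i$ to a diffeomorphism of distortion at most $1+\varepsilon$, I would apply the Koebe distortion principle (Lemma \ref{koebe}) on a controlled enlargement of its domain. Fixing an auxiliary level $n_0 = n_0(\varepsilon,f)$, I would work with a neighborhood of $f^{j_i+1}(\Delta^*)$ built from the surrounding atoms of $\mathcal{P}_{n_0}$; since Theorem \ref{realbounds} yields exponential shrinking of atoms between successive partition levels, the ratio between this enlarged interval and $f^{j_i+1}(\Delta^*)$ produces an arbitrarily large Koebe space $\tau$ once $n_2$ is taken sufficiently large with respect to $n_0$. The total length parameter $\ell = \sum_j |f^j(\cdot)|$ entering Koebe's constant is uniformly bounded by the multiplicity estimate, so the Koebe constant $(1+1/\tau)^2 \exp(C_0 \ell)$ can be pushed below $1+\varepsilon$. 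Finally, the at most $6N+1$ non-trivial factors thus produced are padded with identity maps until the total count equals $k$; these fillers trivially satisfy condition~(3).

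The main obstacle lies in making the Koebe enlargement rigorous: for each gap composition $F_i$ one must exhibit a bona fide diffeomorphic extension to a domain large enough to realize the needed Koebe space, which requires showing that no intermediate iterate of the enlargement meets $\crit(f)$. This depends delicately on the hypothesis that $f^j(\Delta)$ remains in $\mathcal{P}_n$ throughout the orbit, together with the combinatorial structure of the dynamical partitions; threading this needle so that $\tau \to \infty$ while the extended orbit stays free of critical points is the technical heart of the argument.
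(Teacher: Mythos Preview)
Your decomposition splits at the critical times of $\Delta^*$ itself and asks every gap map $F_i$ to land in class~(1), with class~(3) used only as identity padding. This is not how the paper proceeds, and the obstacle you flag in your last paragraph is in fact fatal to this route. A single gap $F_i = f^{j_{i+1}-j_i-1}$ can have length comparable to $q_{n+1}$, while any enlargement of $f^{j_i+1}(\Delta^*)$ large enough to produce Koebe space $\tau \to \infty$ must live at some fixed auxiliary scale $n_0 \ll n$; the iterates of such an enlargement will meet $\crit(f)$ long before $j_{i+1}-j_i-1$ steps elapse (a level-$n_0$ interval returns to each critical point within about $q_{n_0+1}$ steps), so the required diffeomorphic extension simply does not exist. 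No choice of $n_0$ simultaneously gives large space and a critical-free extended orbit across the whole gap.

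The paper resolves this with a \emph{two-scale} splitting that makes essential use of Lemma~\ref{negschwarz}. It fixes the coarse level $n_1$ and an intermediate level $m = \lfloor (n+n_1)/2 \rfloor$, takes $J_{n_1} \in \mathcal{P}_{n_1}$ and $J_m \in \mathcal{P}_m$ with $\Delta^* \subset J_m \subset J_{n_1}$, and runs the algorithm on the orbit of $J_{n_1}$ rather than of $\Delta^*$. When $f^i(J_{n_1})$ is critical-free, one iterates until the next $J_{n_1}$-critical time; this is a type-(1) piece, and Koebe now succeeds because the enlargement $J_m^*$ stays inside the critical-free orbit of $J_{n_1}$ along this segment while still being exponentially larger than $\Delta^*$. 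When $f^i(J_{n_1})$ does contain a critical point $c$ but $f^i(\Delta^*)$ does not, Koebe is unavailable; but since then $f^i(\Delta^*) \subset I_{n_1}(c) \cup I_{n_1+1}(c)$, Lemma~\ref{negschwarz} guarantees that the iterate up to the next $\Delta^*$-critical time has negative Schwarzian. These are the genuine, non-identity type-(3) factors, and they are precisely what closes the argument. Your proposal never invokes Lemma~\ref{negschwarz}, and that is the missing ingredient.
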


In the proof we adapt the argument given in \cite[pages 352-353]{dFdM}.

\begin{proof}[Proof of Lemma \ref{lemadecomp}] Let $C_0=C_0(f) \geq 1$ be given by the Koebe distortion principle (Lemma \ref{koebe}). 
Let $C>1$ and $\mu \in (0,1)$ given by Theorem \ref{realbounds}. Let $\delta \in (0,1)$ be such that
$(1+\delta)^2 \exp(C_0\,\delta)<1+\varepsilon$, and let $n_{2} \in \nt$ be such that
 \[
 n_2 > n_{1}+\dfrac{4 \log(\delta \mu^{3/2}/C)}{\log \mu}\,.
 \]
Note that  $0<(\mu^{1/4})^{n_2-n_1} < \delta \mu^{3/2}/C$. Given $n\geq n_2$ consider$$m=m(n)=\left\lfloor \frac{n+n_1}{2} \right\rfloor,
$$the \emph{integer part} of $\frac{1}{2}(n+n_1)$. 
Let $\Delta$ and $k$ as in the statement, and consider $J_{m} \in \mathcal{P}_{m}$ such that $\Delta \subseteq J_{m}$, and consider also 
$J_{n_1} \in \mathcal{P}_{n_1}$ with $J_m \subseteq J_{n_1}$. Taking $n$ sufficiently large, we may assume that $\Delta^*\subset J_m$.

Let $s\geq 0$ be the smallest natural number such that $f^{s}(J_{n_1})$ contains a critical point of $f$.

\begin{claim}\label{claimdistfs} The distortion of $f^s$ on $\Delta^*$ is bounded by $1+\varepsilon$. 
\end{claim}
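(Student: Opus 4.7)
The plan is a two-step application of the Koebe distortion principle (Lemma \ref{koebe}) to the map $f^s$ on $\Delta^{*}$. The only fact about $s$ that is needed here is its defining minimality: by choice of $s$, no $f^j(J_{n_1})$ with $0\le j<s$ contains a critical point of $f$, hence $f^s$ restricts to a diffeomorphism on $J_{n_1}$, which is the reservoir interval I will iterate through Koebe.

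The first ingredient is a uniform bound $\sum_{j=0}^{s-1}|f^j(J_{n_1})|\le C_2=C_2(f)$. For this I would invoke the same rigid-rotation argument underlying Fact \ref{FATO1}: any block of $q_{n_1+1}$ consecutive iterates of the atom $J_{n_1}\in\mathcal{P}_{n_1}$ has intersection multiplicity at most $3$, and since $f$ has only $N$ critical points the orbit of $J_{n_1}$ cannot avoid all of them for more than a bounded number of such blocks. The second ingredient is a quantitative ``deep inside'' estimate: iterating the strict-refinement inequality $|\Delta|<\mu|\Delta'|$ of Theorem \ref{realbounds} from level $n$ down to level $n_1$, and using the arithmetic choice of $n_2$ (designed precisely so that the resulting geometric factor is $<\delta\mu^{3/2}/C$), I obtain $|\Delta^{*}|/|J_{n_1}|<\delta$, so $\Delta^{*}$ enjoys $\delta^{-1}$-scaled space inside $J_{n_1}$ on the source side.

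A first, rough application of Lemma \ref{koebe} to $f^s$ on $T=J_{n_1}$ with $M=\Delta^{*}$ -- using $\ell=C_2$ and the large source-side space transferred to the image side via bounded cross-ratio distortion -- produces a preliminary distortion bound $K_0=K_0(f)$ for $f^s$ on $\Delta^{*}$. To sharpen this bound to $1+\varepsilon$, I would then choose an auxiliary interval $T'\subset J_{n_1}$ in which $\Delta^{*}$ has exactly $\delta^{-1}$-scaled space. The preliminary bound $K_0$ gives
\[
\sum_{j=0}^{s-1}\bigl|f^j(T')\bigr|\;\le\;K_0\,\dfrac{|T'|}{|J_{n_1}|}\,\sum_{j=0}^{s-1}\bigl|f^j(J_{n_1})\bigr|\;=\;O(\delta)\,,
\]
and transfers, up to the multiplicative constant $K_0$, the source-side $\delta^{-1}$-scaled space of $\Delta^{*}$ in $T'$ into a comparable $\tau\asymp\delta^{-1}$ space of $f^s(\Delta^{*})$ inside $f^s(T')$. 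A second application of Lemma \ref{koebe} to $f^s|_{T'}$ with these parameters yields a Koebe constant of the shape $(1+\delta)^{2}\exp(C_0\delta)$ (after absorbing $K_0$ and $C_2$ into the initial choice of $\delta$), which is $<1+\varepsilon$ by the very definition of $\delta$.

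The main obstacle is the first ingredient, namely the bounded-multiplicity estimate for $\{f^j(J_{n_1})\}_{0\le j<s}$: the orbit of $J_{n_1}$ can wander for a long time before hitting a critical point for the first time, and it is essential to exploit that $J_{n_1}$ is an \emph{atom} of the dynamical partition $\mathcal{P}_{n_1}$ -- and not just an arbitrary interval of comparable size -- so that its forward iterates organize themselves according to the rigid-rotation combinatorics of the return times $q_{n_1+1}$, $q_{n_1+2},\dots$, keeping the cumulative length $\sum|f^j(J_{n_1})|$ bounded by a constant that depends only on $f$.
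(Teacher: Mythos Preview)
Your two-step bootstrap misses the device the paper actually uses, namely the \emph{intermediate level} $m=\lfloor(n+n_1)/2\rfloor$. The paper never takes $T=J_{n_1}$; it applies Koebe \emph{once} with $T=J_m^{*}$ and $M=\Delta^{*}$. Because $n-m\asymp(n-n_1)/2$ is large, $\Delta^{*}$ sits with space $\ge\delta^{-1}$ inside $J_m^{*}$; and because $m-n_1\asymp(n-n_1)/2$ is also large, the refinement inequality gives $|f^j(J_m^{*})|\le\mu^{\lfloor(m-n_1)/2\rfloor}|f^j(J_{n_1}^{*})|$, so $\sum_{j<s}|f^j(J_m^{*})|<\delta$ using only the trivial bound $\sum_{j<s}|f^j(J_{n_1}^{*})|<3$. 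A single Koebe application then produces $(1+\delta)^2\exp(C_0\delta)<1+\varepsilon$. The arithmetic choice of $n_2$ in the paper is calibrated precisely so that \emph{both} halves of this splitting work, not to feed a bootstrap.

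As written, your scheme has two real gaps. In step~1 you invoke ``bounded cross-ratio distortion'' to transfer source-side space of $\Delta^{*}$ in $J_{n_1}$ to image-side space of $f^s(\Delta^{*})$ in $f^s(J_{n_1})$, but the Cross-Ratio Inequality only bounds $\crd(f^s;\cdot,\cdot)$ from \emph{above}; that is the wrong direction for pushing space forward, so Lemma~\ref{koebe} (which needs image-side space) does not apply. In step~2 you use your preliminary $K_0$ to write $|f^j(T')|\le K_0\,(|T'|/|J_{n_1}|)\,|f^j(J_{n_1})|$, but $K_0$ was a distortion bound for $f^s$ on $\Delta^{*}$, not for each $f^j$ on $J_{n_1}\supset T'$; you cannot compare $|f^j(T')|$ to $|f^j(J_{n_1})|$ from control on the much smaller $\Delta^{*}$. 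Incidentally, what you flag as the ``main obstacle'' is the easy part: after replacing $n_1$ by $n_1+1$ if necessary, the iterates $f^j(J_{n_1})$ for $0\le j<s$ are pairwise disjoint atoms of $\mathcal{P}_{n_1}$ (the first return of any such atom to the critical point $c_0$ occurs within one full cycle of the partition), so $\sum_{j<s}|f^j(J_{n_1})|\le 1$ immediately.
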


\begin{proof}[Proof of Claim \ref{claimdistfs}] The proof uses the Koebe Distortion Principle (Lemma \ref{koebe}). 
Replacing $n_1$ by $n_1+1$ if necessary, we may assume that $f^j(J_{n_1})\in\mathcal{P}_{n_1}$ for all $j\in\{0,...,s-1\}$. 
By the real bounds, the space $\tau$ of $\Delta^{*}$ inside $J_{m}^{*}$ is bounded from below by
 \[ 
 \tau \geq \dfrac{1}{C} \dfrac{|J_{m}|}{|\Delta^{*}|} \geq \dfrac{1}{C} 
 \left( \dfrac{1}{\mu} \right)^{\lfloor(n-m)/2\rfloor}>\frac{\mu}{C}\left( \dfrac{1}{\mu} \right)^{(n-m)/2}\ .
 \]
Since $m \leq \frac{n+n_1}{2}$, we have $n-m \geq n- \frac{n+n_1}{2} = \frac{n-n_1}{2}$, and then
 \begin{equation}\label{espacio}
 \dfrac{1}{\tau} \leq \frac{C}{\mu}\,\mu^{(n-m)/2} \leq \frac{C}{\mu}(\mu^{1/4})^{n-n_1} <\sqrt{\mu}\,\delta<\delta\,.
 \end{equation}
Now we estimate the sum $\ell$ of the lengths of the iterates of $J_{m}^*$ between $1$ and $s-1$. Since $\frac{n+n_1}{2} < m+1$, we have 
$m-n_1 > \frac{n-n_1}{2} -1$, and then for all $j \in \{ 0, \cdots, s-1\}$:
$$\big|f^{j}(J_m^*)\big|\leq \mu^{\lfloor(m-n_1)/2\rfloor}\big|f^{j}(J_{n_1}^*)\big| \leq 
   (\mu^{1/4})^{n-n_1} \left( \dfrac{1}{\mu}\right)^{3/2}\big|f^{j}(J_{n_1}^*)\big|\leq \dfrac{\delta}{C} \big|f^{j}(J_{n_1}^*)\big|\,.$$
Therefore:
\begin{equation}\label{iterados}
   \ell=\sum_{j=0}^{s-1} |f^{j}(J_{m}^*)| <\dfrac{3\delta}{C}<\delta\,,
  \end{equation}
since $\sum_{j=0}^{s-1} |f^{j}(J_{n_1}^*)| <3$ by combinatorics (and assuming $C>3$). From inequalities (\ref{espacio}), (\ref{iterados}) and Koebe distortion principle (see \eqref{constkoebe}) we get that the distortion 
on $\Delta^{*}$ is bounded from above
by$$\left(1+ \delta \right)^2 \exp(C_0\,\delta)<1+\varepsilon\,.$$\end{proof}

To prove Lemma \ref{lemadecomp} we decompose the orbit of $\Delta^*$ under $f$ according to the following algorithm. 
For each $i\in\{0,1,...,k-1\}$ we have two cases to consider:
\begin{enumerate}
 \item If $f^i(J_{n_1})$ does not contain any critical point of $f$, we define the corresponding $\phi$ to be $f^{s}$, where $s\geq 1$ is the smallest
natural such that $f^{i+s}(J_{n_1})$ contains a critical point of $f$. Arguing as in Claim \ref{claimdistfs} above, we see that this case belongs
to the first type of components in the statement.
\item If $f^i(J_{n_1})$ contains a critical point $c$ of $f$ we may assume, by taking $n_2$ large enough, that 
$f^i(\Delta^*)\subset I_{n_1}(c)\cup I_{n_1+1}(c)$.
We have two sub-cases to consider:

\begin{enumerate}
\item [(i)] If $f^i(\Delta^*)$ does not contain $c$ (and therefore no other critical point) let $s\geq 1$ be the smallest natural 
such that $f^{i+s}(\Delta^*)$
contains a critical point of $f$, and we define the corresponding $\phi$ to be $f^{s}$. By Lemma \ref{negschwarz} 
(and the fact that composition of diffeomorphisms with negative Schwarzian derivative is a diffeomorphism with 
negative Schwarzian derivative too) this case belongs to the third type of components in the
statement.
\item [(ii)] If the critical point belongs to $f^i(\Delta^*)$ we define the corresponding $\phi$ to be just a single
iterate of $f$ (and this sub-case belongs to the second type of components in the statement).
\end{enumerate}
\end{enumerate}

Note finally that, by combinatorics, the first case happens at most $3N+1$ times, while the second case occurs at most $3N$ times.
\end{proof}

With Lemma \ref{lemadecomp} at hand, we are ready to prove our main results.

\begin{proof}[Proof of Theorem \ref{CRIuniversal}] Theorem \ref{CRIuniversal} follows at once from the decomposition obtained in 
Lemma \ref{lemadecomp}, by combining Remark
\ref{remdistandcrd}, Lemma \ref{contracts} and Item \eqref{itemcross} of Lemma \ref{lemacalculo}. The constant $B$ depends only 
on the number and order of the critical points of $f$, but not on $f$ itself. It is in fact enough to consider $B=(1+1/2)^{2(3N+1)}(9d^2)^{3N}$.
\end{proof}

\begin{proof}[Proof of Theorem \ref{teobeau}] As explained in Section \ref{statements}, the proof of Theorem \ref{teobeau} is the 
same as the proof of the real bounds (Theorem \ref{realbounds}) given by the first two authors in \cite[Section 3]{EdF}, but replacing the Cross-Ratio Inequality with Theorem \ref{CRIuniversal}.
\end{proof}

\begin{proof}[Proof of Theorem \ref{scalingratios}] This is clearly a special case of Theorem \ref{teobeau}.
\end{proof}

\begin{proof}[Proof of Corollary \ref{coro}] Here we merely sketch the proof (the details are tedious repetitions of arguments in \cite{EdF}).
The proof uses the notion of \emph{fine grids} given in \cite[Definition 5.1]{EdF} and the criterion for quasi-symmetry given in
\cite[Proposition 5.1]{EdF}. Let $\big\{\mathcal{Q}_n(f)\big\}_{n \geq 0}$ be the fine grid constructed in \cite[Proposition 5.2]{EdF}, 
and let $B>1$ and $n_0=\max\big\{n_0(f),n_0(g)\big\}$ be given by Theorem \ref{teobeau}. Then for all $n \geq n_0$, adjacent atoms of 
$\mathcal{P}_n^f$ are comparable by the constant $B$, and the same is valid for adjacent atoms of $\mathcal{P}_n^g$. Consider the sequence 
$\big\{\mathcal{Q}'_n(f)\big\}_{n \geq n_0}$ of partitions of $J_{n_0}^{f}=I_{n_0}^{f} \cup I_{n_0+1}^{f}$ given by 
$\mathcal{Q}'_n(f)=\{\Delta\in\mathcal{Q}_n(f):\Delta \subset J_{n_0}^{f}\}$. Then $\big\{\mathcal{Q}'_n(f)\big\}_{n \geq n_0}$ is a fine 
grid restricted to $J_{n_0}^{f}$, and its fine grid constants depend only on $B$, $N$ and $d$, and therefore are universal. By 
\cite[Proposition 5.1]{EdF}, it follows that $h|_{J_{n_0}^{f}}$ has quasi-symmetric distortion bounded by $K_0=K_0(B,N,d)$ (a universal 
constant). In particular, we have $\sigma_{h}(x) \leq K_0$ for all $x \in J_{n_0}^{f}$. It now follows from Theorem \ref{CRIuniversal} 
that $\sigma_{h}(x) \leq K_1$ for all $x \in S^1$, for some universal constant $K_1=K_1(N,d)$.
\end{proof}

\appendix

\section{Proofs of auxiliary results}\label{appA}

In this appendix, we prove the three auxiliary lemmas stated without proof in the main text: Lemma \ref{contracts}, Lemma \ref{lemacalculo} 
and Lemma \ref{lemapartition}. All of them are well known, but we provide proofs for the sake of completeness of exposition, and as a courtesy 
to the reader. Let us start with the following observation.

\begin{lemma}\label{nucleoschwarz} The kernel of the Schwarzian derivative is the group of M\"obius transformations. Moreover, if $\phi$ 
is a M\"obius transformation and $f$ is any $C^3$ map, then $S(\phi \circ f)=Sf$.
\end{lemma}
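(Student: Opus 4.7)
The plan is to reduce both assertions to a single auxiliary identity, the \emph{Schwarzian chain rule}
$$S(f\circ g)(x) \;=\; Sf(g(x))\,\big(Dg(x)\big)^{2} \,+\, Sg(x),$$
which follows from differentiating $D(f\circ g) = (Df\circ g)\cdot Dg$ two more times and substituting into the definition of $S$. Granted this identity, the second assertion of the lemma is immediate: if $\phi$ is a Möbius transformation with $S\phi\equiv 0$ (as we will establish in the next step), then $S(\phi\circ f)(x) = S\phi(f(x))\,(Df(x))^{2} + Sf(x) = Sf(x)$.

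To identify the kernel of $S$, I would use the well-known alternative expression
$$Sf(x) \;=\; -2\sqrt{Df(x)}\cdot \frac{d^{2}}{dx^{2}}\!\left(\frac{1}{\sqrt{Df(x)}}\right),$$
valid wherever $Df>0$ and verified by differentiating $(Df)^{-1/2}$ twice and comparing with the definition of $Sf$. This formulation makes the characterization transparent: $Sf\equiv 0$ on an interval if and only if $1/\sqrt{Df}$ is an affine function $\alpha x + \beta$, i.e. $Df(x) = 1/(\alpha x + \beta)^{2}$. Integrating yields an $f$ of the form $(Ax+B)/(Cx+D)$ with $AD-BC\neq 0$; the degenerate sub-case $\alpha=0$ gives $Df$ constant and hence $f$ affine, which is itself Möbius with $C=0$. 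Conversely, a Möbius transformation $\phi(x)=(ax+b)/(cx+d)$ with $ad-bc\neq 0$ has $D\phi(x) = (ad-bc)/(cx+d)^{2}$, so on every interval where $cx+d$ has constant sign $1/\sqrt{|D\phi|}$ is affine, giving $S\phi\equiv 0$.

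There is no real obstacle here: the entire argument is chain-rule bookkeeping together with a single integration. The only point requiring mild care is that the intermediate identity for $Sf$ presupposes $Df>0$, which for Möbius transformations is handled by restricting to each maximal interval on which $cx+d$ has constant sign (and stays away from the pole).
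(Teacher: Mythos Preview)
Your proposal is correct and follows essentially the same route as the paper. Both arguments use the chain rule for the Schwarzian to reduce the second assertion to $S\phi\equiv 0$, and both identify the kernel via the substitution $g=(Df)^{-1/2}$: your identity $Sf=-2\sqrt{Df}\cdot (1/\sqrt{Df})''$ is exactly the paper's $S\phi=-2\,D^2g/g$, and the integration step is the same.
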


\begin{proof}[Proof of Lemma \ref{nucleoschwarz}] On one hand, the fact that the Schwarzian derivative vanish at M\"obius transformations 
is a straightforward computation. On the other hand, given an increasing $C^3$ map $\phi$ without critical points on some interval $I$, 
consider the $C^2$ map $g$ defined by $g=(D\phi)^{-1/2}$. A straightforward computation gives the identity:$$S\phi=-2\left(\frac{D^2g}{g}\right).$$

In particular $S\phi\equiv0$ if and only if $D^2g\equiv0$, and then there exist real numbers $a$ and $b$ such that $g(x)=ax+b$, that is, 
$D\phi(x)=1/(ax+b)^2$. By integration we get:$$\phi(x)=\left(\frac{-1}{a}\right)\left(\frac{1}{ax+b}\right)+c\,,$$for some real number $c$. 
In particular, $\phi$ is a M\"obius transformation.

Finally, by the \emph{chain rule} for the Schwarzian derivative of the composition of two functions:
$$S(\phi \circ f)(x)=Sf(x)+S\phi(f(x))(Df(x))^{2}\,,$$we see at once that if $\phi$ is a M\"obius transformation, 
we have $S\phi \equiv 0$ and then $S(\phi \circ f)=Sf$.
\end{proof}

Let us point out that the change of variables used in the proof of Lemma \ref{nucleoschwarz} was already used by Yoccoz in \cite{yoccoz}. 
With Lemma \ref{nucleoschwarz} at hand, we are ready to prove Lemma \ref{contracts}, stated in Section \ref{prelim}.

\begin{proof}[Proof of Lemma \ref{contracts}] The proof is the one given in \cite[Section IV.1]{dMvS}. Let $M =[b,c ]\subseteq T=[a,d]$. 
Let us call $L$ and $R$ the two connected components of $T \setminus M$. Let $\phi$ be the (unique) M\"obius transformation such that 
$\phi(f(a))=a,$ $\phi(f(b))=b$ and $\phi(f(d))=d$. Note that $\phi \circ f$ is a $C^3$ diffeomorphism with negative Schwarzian derivative, 
since $S(\phi \circ f)=Sf<0$ by Lemma \ref{nucleoschwarz}.

We claim that $\phi (f (c))>c$. Indeed, if this is not true, then by the Mean Value Theorem there exist $z_0 \in [a,b]$, $z_1 \in [b,c]$ 
and $z_2 \in [c,d]$ such that
\[ D(\phi \circ f)(z_0) = \dfrac{\phi(f(a))-\phi(f(b))}{a-b} =1 , \ \ D(\phi \circ f)(z_1) = \dfrac{\phi(f(c))-\phi(f(b))}{c-b} \leq 1 \] and
\[ D(\phi \circ f)(z_2) = \dfrac{\phi(f(d))-\phi(f(c))}{d-c} \geq 1. \]

If\footnote{In the particular case $z_1=z_0$, we obtain $z_1=z_0=b$, and then $D\big(\phi \circ f\big)(b)=1$ and $\phi (f (c))=c$. This implies that $D\big(\phi \circ f\big)(c)<1$ (otherwise, the Minimum Principle would imply that $D\big(\phi \circ f\big)(x)>1$ for all $x \in (b,c)$, which is impossible since $\phi \circ f$ fixes both $b$ and $c$). Again, this contradicts the Minimum Principle since $c\in(b,z_2)$. The remaining case $z_1=z_2$ is analogous.}$z_1\in(z_0,z_2)$, the previous inequalities contradict the Minimum Principle for diffeomorphisms with negative Schwarzian derivative \cite[Section II.6, Lemma 6.1]{dMvS}. Therefore, $\phi(f(c))>c$ as claimed. With this at hand we get:
\[\crd(\phi\circ f; M,T)= \dfrac{\big[\phi\big(f(M)\big),\phi\big(f(T)\big)\big]}{[M,T]} = \dfrac{\big|M \cup L\big|\,\big|\phi\big(f(c)\big)-d\big|}{\big|R\big|\,\big|a-\phi\big(f(c)\big)\big|} <1\,.\]

Since $\phi$ is a M\"obius transformation, $\crd(\phi \circ f; M,T)=  \crd(f;M,T)$ and the lemma is proved.
\end{proof}

\begin{proof}[Proof of Lemma \ref{lemacalculo}] From Definition \ref{naoflat} there exists a neighborhood of the critical point $c$ 
such that $f(x)= g\big(\phi(x)\big) +f(c)$, where $g$ is the map $x \mapsto x^{d}$ and $\phi$ is a $C^{3}$ diffeomorphism with $\phi(c)=0$. 
The chain rule for the Schwarzian derivative gives $Sf = Sg(\phi)(D \phi)^2+S\phi$.

Since $Sg(x)= -\dfrac{(d^2-1)}{2 x^2}$, we get: 
\[
Sg(\phi(x))(D \phi(x))^2= -\dfrac{1}{2}(d-1)(d+1)\left( \dfrac{D \phi(x)}{\phi(x)} \right)^{2} \leq - \dfrac{A}{(\phi(x))^{2}},
\]
where $A= \dfrac{1}{2}(d-1)(d+1)\min_{x}\big|D\phi(x)\big|>0$. In particular:$$Sf(x)<\frac{-A+S\phi(x)\big(\phi(x)\big)^2}{\big(\phi(x)\big)^2}\,.$$

On the other hand, since $\phi$ is a diffeomorphism, $|S\phi(x)|<M$ for some $M>0$. Then we can choose $\delta>0$ such that for all $x \in (c-\delta, c+\delta)$ 
we have $|\phi(x)|< \sqrt{\frac{A}{M}}$, and this implies that $Sf<0$ in  $(c-\delta,c+\delta)\setminus\{c\}$. Finally, since $\phi$ is bi-Lipschitz we have $|\phi(x)| \asymp|x-c|$ and we obtain Item \eqref{itemSf}.

Item \eqref{itemDf} follows at once from Taylor Theorem since:
\[
 \lim_{x \rightarrow c} \left( \dfrac{D f(x)}{|x-c|^{d-1}} \right) = d(D \phi (c))^{d} >0\,.
\]

With Item \eqref{itemDf} at hand we prove Item \eqref{itemdist}. Let $J=(a,b)\subseteq U$. By symmetry it is enough to consider the following two cases:
\begin{enumerate}
\item[(i)] $c \leq a <b$: In this case we have for any $x\in(a,b)$ that
\begin{equation*}
\begin{aligned}
\dfrac{Df(x) |J|}{|f(J)|} &\leq \dfrac{\beta(x-c)^{d-1}(b-a)}{\alpha\int_{a}^{b} (t-c)^{d-1}dt} \\
&\leq \left(\dfrac{\beta d}{\alpha}\right) \dfrac{(b-c)^{d-1} (b-c-a+c)}{(b-c)^{d} - (a-c)^{d}} \\
&= \left(\dfrac{\beta d}{\alpha}\right) \left( 1 + \dfrac{(a-c)^{d}-(b-c)^{d-1}(a-c)}{(b-c)^d-(a-c)^d} \right)\\
&\leq \dfrac{\beta d}{\alpha}<3d/2.
\end{aligned}
\end{equation*}
\item[(ii)] $a< c< b$: Without loss of generality, we may assume that $|a-c| < |c-b|$. If $x \in J$, then$$\dfrac{Df(x) |J|}{|f(J)|} 
\leq \dfrac{\beta |x-c|^{d-1}|b-a|}{\int_{c}^{b} Df(t)\,dt} 
\leq \dfrac{2 \beta |b-c|^{d}}{\int_{c}^{b} \alpha(t-c)^{d-1}dt}
= \dfrac{2 \beta d}{ \alpha}<3d.$$
\end{enumerate}

Finally, to prove Item \eqref{itemcross}, let us call $L,R$ the two connected components of 
$T \setminus M$. By the Mean Value Theorem there exist $z_{0} \in L$ and $z_{1} \in R$ such that
$$\crd(f;M,T) = \dfrac{Df(z_{0})\,Df(z_{1})\,|L \cup M|\,|M \cup R|}{\big|f(L \cup M)\big|\,\big|f(M \cup R)\big|}\,.$$

Since $z_0\in L\cup M$ and $z_1\in R\cup M$ we obtain from Item \eqref{itemdist} that$$\crd(f;M,T)\leq (3d)^2.$$
\end{proof}

\begin{proof}[Proof of Lemma \ref{lemapartition}] Since the families $\mathcal{P}_n$ are dynamically defined, and since any multicritical circle 
map with irrational rotation number is topologically conjugate to a rigid rotation (see Yoccoz's Theorem \ref{yoccoztheorem} in Section \ref{prelim}) we will assume in this proof that $f$ is itself the rigid rotation in the unit circle of angle $2\pi\rho$, where $\rho\in [0,1)$ is an irrational number. Moreover, in order to simplify the notation, we normalize the unit circle to have total length equal to $1$ (and then $f$ is just the rotation of angle $\rho$). Being irrational, $\rho$ has an infinite continued-fraction expansion, say $\rho = [a_0,a_1, \cdots]$.

We claim that for all $n\in\nt$, if $\{p_{n}/q_{n} \}$ is the sequence obtained by truncating the continued-fraction expansion at level $n-1$, 
we have:
\begin{equation}\label{soma1}
q_{n}p_{n+1}-q_{n+1}p_{n}= (-1)^{n}\,.
\end{equation}
Indeed, note that $q_{0}p_{1}-q_{1}p_{0}= 1$ and that $q_{1}p_{2}-q_{2}p_{1}= a_{0}a_{1} -a_{1}a_{0}-1=-1$. Let us suppose now that 
$q_{n}p_{n+1}-q_{n+1}p_{n}= (-1)^{n}$. Then:
\begin{equation*}
   \begin{aligned}
     q_{n+1}p_{n+2}-q_{n+2}p_{n+1} &= q_{n+1}(a_{n+1}p_{n+1}+p_{n})- (a_{n+1}q_{n+1}+q_{n})p_{n+1} \\
      &= a_{n+1}q_{n+1}p_{n+1}+p_{n}q_{n+1}-a_{n+1}q_{n+1}p_{n+1} -q_{n}p_{n+1} \\
      &=-(q_{n}p_{n+1}-q_{n+1}p_{n})=(-1)^{n+1}\,,\quad\mbox{as claimed.}
   \end{aligned}
  \end{equation*}

The arithmetical properties of the continued fraction expansion described in \S \ref{prelim} imply that, for any point $x \in S^1$, the iterates 
$\{f^{q_n}(x)\}_{n \in \nt}$ are the \emph{closest returns} of the orbit of $x$ under the rigid rotation $f$, in the following 
sense:$$d\big(x,f^{q_n}(x)\big)<d\big(x,f^j(x)\big)\quad\mbox{for any}\quad j\in\{1,...,q_n-1\}$$where $d$ denote the standard distance in $S^1$. 
In particular, all members of the family$$\big\{I_n, f(I_n), \cdots, f^{q_{n+1}-1}(I_n)\big\}$$are  pairwise disjoint, and all members in the 
family$$\big\{I_{n+1}, f(I_{n+1}), \cdots, f^{q_{n}-1}(I_{n+1})\big\}$$are pairwise disjoint too. Moreover, we claim that any two members in the 
union of these families (and recall that this union is precisely the definition of $\mathcal{P}_n$) are disjoint. Indeed, suppose, by 
contradiction, that there exist $i<q_{n+1}$ and $j<q_{n}$ such that $f^{i}(I_{n}) \cap f^{j}(I_{n+1}) \neq \O$. Without loss of generality,
we may assume that $i<j=i+l$, for some $l<q_{n}$, and that the $q_n$-th iterate of every point $x \in S^{1}$ is on the right-hand side of $x$, 
and consequently the $q_{n+1}$-th iterate is on the left-hand side of $x$. We have three possible cases to consider:
\begin{itemize}
\item If $f^{i}(I_{n}) \subseteq f^{j}(I_{n+1})$, then $f^{j}(I_{n+1})$ intersects $f^{i}(I_{n+1})$ and this is impossible as explained above.
\item If $f^{j}(I_{n+1}) \subseteq f^{i}(I_{n})$, then the point $f^{j}(c)=f^{i+l}(c)$ is closer to $f^{i}(c)$ than $f^{i+q_n}(c)$, which is impossible since $l<q_{n}$.
\item If both differences between $f^{j}(I_{n+1})$ and $f^{i}(I_{n})$ are non-empty and connected, then we have two sub-cases: either $f^{j}(c) \in f^{i}(I_{n})$ or $f^{j+q_{n+1}}(c) \in f^{i}(I_{n})$. In the first case, the point $f^{j}(c)=f^{i+l}(c)$ is closer to $f^{i}(c)$ than $f^{i+q_{n}}(c)$, and since $l<q_{n}$ this is a contradiction. In the second case, the point $f^{i+q_{n}}(c)= f^{j}(f^{q_{n}+i-j}(c))$ is closer to 
  $f^{j}(c)$ than $f^{j+q_{n+1}}(c)$, which again is impossible since $q_{n}+i-j<q_{n+1}$.
  \end{itemize}
Therefore, any two members of $\mathcal{P}_n$ are disjoint, as claimed.

Finally, since we are assuming that $f$ is the rigid rotation of angle $\rho$ in the (normalized) unit circle, the lengths of the intervals 
$I_n$ and $I_{n+1}$ are $|q_{n}\rho -p_{n}| = q_{n}|\rho - p_{n}/q_{n}|$ and $q_{n+1}|p_{n+1}/q_{n+1} -\rho|$ respectively. Therefore, the 
total length of the union of the members of $\mathcal{P}_n$ is equal to:
  \begin{equation*}
   \left|q_{n}q_{n+1}\left( \dfrac{p_{n+1}}{q_{n+1}} - \dfrac{p_{n}}{q_{n}} \right)\right|= |q_{n}p_{n+1}-p_{n}q_{n+1}|.
   \end{equation*}
By \eqref{soma1}, this absolute value is equal to $1$, that is, the union of the members of $\mathcal{P}_n$ is a compact set of full Lebesgue 
measure, and therefore it covers the whole circle.
\end{proof}

\section*{Acknowledgements}

We wish to thank the warm hospitality of IMPA, where part of this paper was written.

\end{document}